\documentclass[a4paper,12pt,reqno]{amsart}
\usepackage[T1]{fontenc}
\usepackage[utf8]{inputenc}
\usepackage[libertinus,vvarbb]{newtx}
\usepackage[margin=1in]{geometry}
\usepackage{enumitem}
\usepackage{graphicx}
\usepackage[svgnames]{xcolor}
\usepackage{xparse}
\usepackage{xurl}
\usepackage[bookmarksdepth=2]{hyperref}

\hypersetup{colorlinks=true,urlcolor=MidnightBlue,linkcolor=MidnightBlue,citecolor=MidnightBlue}

\usepackage[normalem]{ulem}

\newcommand{\stkout}[1]{\ifmmode\text{\sout{\ensuremath{#1}}}\else\sout{#1}\fi}
\newcommand{\crsout}[1]{\ifmmode\text{\xout{\ensuremath{#1}}}\else\xout{#1}\fi}

\numberwithin{equation}{section}

\newtheorem{theorem}{Theorem}[section]
\newtheorem{proposition}[theorem]{Proposition}
\newtheorem{lemma}[theorem]{Lemma}
\newtheorem{corollary}[theorem]{Corollary}

\theoremstyle{remark}
\newtheorem*{notation}{Notation}
\newtheorem*{structure}{Structure of the paper}

\DeclareMathOperator{\re}{Re}
\DeclareMathOperator{\im}{Im}

\DeclareMathOperator{\dist}{dist}

\newcommand{\ind}{\mathbb{1}}
\newcommand{\C}{\mathbb{C}}
\newcommand{\R}{\mathbb{R}}

\newcommand{\sph}{\mathbb{S}}
\newcommand{\ball}{\mathbb{B}}
\newcommand{\fourier}{\mathscr{F}}
\newcommand{\eps}{\varepsilon}
\newcommand{\ph}{\varphi}
\newcommand{\thet}{\vartheta}

\renewcommand{\le}{\leqslant}
\renewcommand{\ge}{\geqslant}

\newcommand{\lv}{\lvert}
\newcommand{\rv}{\rvert}
\newcommand{\lV}{\lVert}
\newcommand{\rV}{\rVert}

\newcommand{\F}[4]{{_2F_1}\biggl( \genfrac{}{}{0pt}{0}{{#1}, \;\; {#2}}{{#3}} \; \bigg| \; {#4} \biggr)}

\NewDocumentCommand{\formula}{ssom}{%
 \IfBooleanTF{#1}{%
  \IfBooleanTF{#2}{%
   \IfValueTF{#3}%
    {\begin{align}\label{#3}\begin{gathered}#4\end{gathered}\end{align}}%
    {\begin{gather}#4\end{gather}}%
  }{%
   \IfValueTF{#3}%
    {\begin{align}\label{#3}\begin{aligned}#4\end{aligned}\end{align}}%
    {\begin{gather*}#4\end{gather*}}%
  }%
 }{%
  \IfValueTF{#3}%
   {\begin{align}\label{#3}#4\end{align}}%
   {\begin{align*}#4\end{align*}}%
 }%
}

\newcommand{\nist}[2]{\href{https://dlmf.nist.gov/#1.E#2}{Eq.~#1.#2}}
\newcommand{\doi}[1]{\href{https://doi.org/#1}{\textsf{\scriptsize DOI:#1}}}
\newcommand{\arxiv}[1]{\href{https://arxiv.org/abs/#1}{\textsf{\scriptsize arXiv:#1}}}
\newcommand{\isbn}[1]{\textsf{ISBN:#1}}
\NewDocumentCommand{\link}{oom}{\href{#3}{\textsf{\scriptsize \IfValueTF{#1}{#1}{#3}}}\IfValueT{#2}{{\textsf{\scriptsize #2}}}}

\begin{document}

\title[Shorter proof of dimension-free $L^p$ estimates for maximal Riesz transforms]{Shorter proof of dimension-free $L^p$ estimates \\ for maximal Riesz transforms}
\author{Maciej Kucharski, Mateusz Kwaśnicki}
\thanks{Maciej Kucharski was funded by FCT/Portugal and the Recovery and Resilience Plan (PRR) through projects UID/04459/2025 and UID/PRR/04459/2025, and by the project 2023.17881.ICDT (SHADE). Mateusz Kwaśnicki was supported by the National Science Centre, Poland, grant no.\@ 2023/49/B/ST1/04303}
\address{Maciej Kucharski \\ Mathematical Institute \\ University of Wrocław \\ Plac Grunwaldzki 2 \\ 50-384 Wrocław, Poland \& Centro de Análise Matemática, Geometria e Sistemas Dinâmicos, Instituto Superior Técnico, Av. Rovisco Pais, 1049-001 Lisboa, Portugal}
\email{\href{mailto:maciej.kucharski@math.uni.wroc.pl}{\textsf{maciej.kucharski@math.uni.wroc.pl}}}
\address{Mateusz Kwaśnicki \\ Department of Analysis and Stochastic Processes \\ Wrocław University of Science and Technology \\ Wybrzeże Wyspiańskiego 27 \\ 50-370 Wrocław, Poland}
\email{\href{mailto:mateusz.kwasnicki@pwr.edu.pl}{\textsf{mateusz.kwasnicki@pwr.edu.pl}}}
\keywords{Riesz transform, maximal inequality, dimension-free estimate}
\subjclass[2020]{%
 42B15, 
 42B20, 
 42B25
}

\begin{abstract}
We provide a shorter and more direct proof of $L^p$ estimates for maximal Riesz transforms (of an arbitrary order) in terms of the corresponding Riesz transforms, with a constant independent of the dimension of the Euclidean space $\R^d$. This result was originally proved by Mateu, Orobitg, Pérez and Verdera with a constant depending on the dimension, and improved to a dimension-free inequality by Kucharski, Wróbel and Zienkiewicz.
\end{abstract}

\maketitle

%
%

\section{Introduction}

The goal of this paper is to provide a shorter and more direct proof of the following maximal inequality for $n$th order Riesz transforms on Euclidean spaces of dimension $d \ge 1$: if $p \in (1, \infty)$ and $P$ is a solid (i.e.\@ homogeneous) harmonic polynomial on $\R^d$ of degree $n$, then for every Schwartz function $f$ we have
\formula[eq:maximal:riesz]{
 \lV R_*^{(P)} f \rV_p & \le C \lV R^{(P)} f \rV_p ,
}
with a constant $C$ which depends on $p$ and $n$, but not on the dimension $d$ nor on the particular choice of $P$. The $n$th order Riesz transform is a singular integral operator given by
\formula{
 R^{(P)} f(x) & = \frac{\Gamma(\tfrac{d + n}{2})}{\pi^{d / 2} \Gamma(\tfrac{n}{2})} \lim_{s \to 0^+} \int_{\R^d \setminus \ball(0, s)} \frac{P(y)}{\lv y \rv^{d + n}} \, f(x - y) dy ,
}
and the associated maximal function is given by
\formula{
 R_*^{(P)} f(x) & = \frac{\Gamma(\tfrac{d + n}{2})}{\pi^{d / 2} \Gamma(\tfrac{n}{2})} \sup_{s \in (0, \infty)} \biggl\lv \int_{\R^d \setminus \ball(0, s)} \frac{P(y)}{\lv y \rv^{d + n}} \, f(x - y) dy \biggr\rv .
}
The maximal inequality~\eqref{eq:maximal:riesz} with a constant $C$ which additionally depends on the dimension was proved by Mateu and Verdera in~\cite{mv} (when $n = 1$), Mateu, Orobitg and Verdera in~\cite{mov} (when $n$ is even), and Mateu, Orobitg, Pérez and Verdera in~\cite{mopv} (when $n$ is odd). The dimension-free bound stated above was found by Wróbel and the first named author in~\cite{kw} (when $n = 1$ and $p = 2$), Liu, Melentijević and Zhu in~\cite{lmz} (when $n = 1$), and Wróbel, Zienkiewicz and the first named author in~\cite{kwz}.

If $\fourier$ denotes the Fourier transform, then
\formula{
 \fourier\bigl[R^{(P)} f\bigr](\xi) & = \frac{P(-i \xi)}{\lv \xi \rv^n} \, \fourier f(\xi) = (-i)^n P\biggl(\frac{\xi}{\lv \xi \rv}\biggr) \fourier f(\xi) .
}
The truncated Riesz transforms
\formula{
 R_s^{(P)} f(x) & = \frac{\Gamma(\tfrac{d + n}{2})}{\pi^{d / 2} \Gamma(\tfrac{n}{2})} \int_{\R^d \setminus \ball(0, s)} \frac{P(y)}{\lv y \rv^{d + n}} \, f(x - y) dy
}
satisfy
\formula[eq:factorisation]{
 \fourier\bigl[R_s^{(P)} f\bigr](\xi) & = \fourier B_s^{(n)}(\xi) \, \fourier\bigl[R^{(P)} f\bigr](\xi) ,
}
where
\formula{
 B_s^{(n)}(x) & = s^{-d} B^{(d, n)}(s^{-1} \lv x \rv)
}
is an integrable radial function. Equivalently, the truncated Riesz transform is the composition of the Riesz transform and the convolution operator with integrable kernel $B_{\smash{s}}^{(n)}$. This appears implicitly in~\cite{mopv,mov,mv}, and it was explicitly used in~\cite{kw,lmz} (when $n = 1$) and~\cite{kwz} (for all $n$); see Proposition~2.1 in~\cite{kwz}. It follows that if $B_{\smash{*}}^{(n)} f$ is the maximal function defined by
\formula{
 B_*^{(n)} f(x) & = \sup_{s \in (0, \infty)} \lv f * B_s^{(n)}(x) \rv ,
}
then
\formula{
 R_*^{(P)} f(x) & \le B_*^{(n)}\bigl[R^{(P)} f\bigr](x) .
}
In particular, an $L^p(\R^d)$ estimate for $B_{\smash{*}}^{(n)}$ of the form $\lV B_{\smash{*}}^{(n)} f \rV_p \le C \lV f \rV_p$ implies the maximal inequality~\eqref{eq:maximal:riesz} with the same constant $C$, uniformly with respect to the choice of the polynomial $P$ of degree $n$. The following dimension-free bound for $B_{\smash{*}}^{(n)}$ from~\cite{kwz} thus yields~\eqref{eq:maximal:riesz} with a constant independent of the dimension.

\begin{theorem}[Theorems~2.2 and~2.3 in~\cite{kwz}]
\label{thm:maximal}
For every $n \ge 1$ and $p \in (1, \infty)$, there is a constant $C_{n, p}$ such that for $d \ge 1$ and a Schwartz function $f$ on $\R^d$, we have
\formula[eq:maximal]{
 \lV B_*^{(n)} f \rV_p & \le C_{n, p} \lV f \rV_p .
}
More generally, for every $n \ge 1$ and $p, q \in (1, \infty)$, there is a constant $C_{n, p, q}$ such that for $d \ge 1$ and Schwartz functions $f_1, f_2, \ldots, f_L$ on $\R^d$, we have the Fefferman--Stein bound
\formula[eq:vector]{
 \biggl\lV \biggl( \sum_{l = 1}^L (B_*^{(n)} f_l)^q \biggr)^{1/q} \biggr\rV_p & \le C_{n, p, q} \biggl\lV \biggl( \sum_{l = 1}^L \lv f_l \rv^q \biggr)^{1 / q} \biggr\rV_p .
}
\end{theorem}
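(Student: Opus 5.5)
The idea is to dominate $B_*^{(n)} f$ pointwise by a finite combination of maximal operators already known to satisfy dimension-free $L^p$ and Fefferman--Stein bounds. The natural candidates are the heat (Gaussian) semigroup maximal function and the Poisson maximal function, together with their $\ell^q$-valued versions, which follow from Stein's semigroup theory via the Hopf--Dunford--Schwartz ergodic theorem and vector-valued extensions. The plan is to write the radial profile $B^{(d,n)}$ as a superposition of Gaussians. Concretely, I would look for a representation
\formula{
 B_s^{(n)}(x) & = \int_0^\infty G_{t s^2}(x) \, \mu_{d,n}(dt) ,
}
where $G_t$ is the Gauss–Weierstrass kernel on $\R^d$ and $\mu_{d,n}$ is a finite signed measure on $(0,\infty)$. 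If such a representation holds, then
\formula{
 B_*^{(n)} f & \le \lV \mu_{d,n} \rV \sup_{t > 0} \lv G_t * f \rv .
}
The desired bounds \eqref{eq:maximal} and \eqref{eq:vector} would then follow from the dimension-free estimates for the heat maximal function, provided that the total variation $\lV \mu_{d,n} \rV$ is bounded uniformly in $d$.

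To find $\mu_{d,n}$, I would compute $\fourier B^{(n)}_1$ explicitly. It is a radial multiplier equal to $1$ minus the Fourier transform of the truncated part $\ind_{\ball(0,1)} P(y) \lv y \rv^{-d-n}$ divided by $P(-i\xi)\lv\xi\rv^{-n}$. By the Funk–Hecke formula this is a Bessel-type integral independent of $P$, namely
\formula{
 \int_0^1 J_{d/2+n-1}(r\lv\xi\rv)\, r^{-d/2}\dots\,dr ,
}
and it can be rewritten as a hypergeometric function ${}_2F_1$ or ${}_1F_2$ in $\lv\xi\rv^2$, or in $r$ on the spatial side. On the spatial side $B^{(d,n)}(r)$ is supported in a neighbourhood of the unit ball, and for even $n$ it is compactly supported. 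It is given as a polynomial-times-$(1-r^2)_+$-power expression, possibly involving ${}_2F_1$ for odd $n$.

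I would then express $B^{(d,n)}(\lv x\rv)$ as a signed mixture of the normalized ball averages $\lv \ball(0,\rho) \rv^{-1}\ind_{\ball(0,\rho)}$, which is a cleaner alternative to Gaussians. This uses $\frac{d}{d\rho}$ of radial profiles, via $B(r) = -\int_r^\infty B'(\rho)\,d\rho$. The ball-average maximal function is dimension-free bounded in $L^p$ by Stein's theorem, and the vector-valued bound also holds (Stein, and Bourgain/Carbery for $q$-variants). This gives
\formula{
 B_*^{(n)} f & \le \biggl( \int_0^\infty \rho^d \lv \partial_\rho B^{(d,n)}(\rho) \rv \, \lv \ball(0,1) \rv \, d\rho \biggr) M f ,
}
where $M$ is the centred Hardy–Littlewood maximal function over balls.

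The main obstacle is showing that this total-variation integral stays bounded as $d \to \infty$. The mass $\rho^d \lv \ball(0,1)\rv$ concentrates near $\rho=1$ in high dimensions, while $B^{(d,n)}$ has a singular profile near $\rho = 1$ coming from the truncation. I would handle this by writing the profile via its hypergeometric integral representation and checking that $\partial_\rho B$ changes sign only boundedly many times (depending on $n$). The total variation is then controlled by boundedly many values of $\rho^d \lv\ball(0,1)\rv B^{(d,n)}(\rho)$, each of which is $O_n(1)$ by integrability and normalisation ($\int B = 1$). If the sign count fails for odd $n$ because of the tail of $B^{(d,n)}$ outside the unit ball, I would instead split off that tail and dominate it by the Poisson maximal function. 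Since it decays like a Poisson-type kernel, its bound is again dimension-free, and its vector-valued form follows from the Fefferman–Stein extension for symmetric diffusion semigroups.
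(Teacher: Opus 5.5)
\textbf{There is a genuine gap.} The obstacle you flag as the main difficulty is an impossibility, not a technical hurdle. Suppose $B^{(d,n)}(\lv x\rv)=\int_0^\infty \lv\ball(0,\rho)\rv^{-1}\ind_{\ball(0,\rho)}(x)\,\mu(d\rho)$, or the analogous Gaussian or Poisson mixture. Then trivially $\lV\mu\rV\ge\lV B^{(d,n)}\rV_{L^1(\R^d)}$. More generally, any pointwise bound $B_*^{(n)}f\le C\,Tf$ with $T$ a contraction on $L^\infty$ forces $C\ge\lV B^{(d,n)}\rV_1$: test it on a smooth $f$ with $\lv f\rv\le 1$ approximating the sign of $B^{(d,n)}(\lv\cdot\rv)$, and look near $x=0$.

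For $n\ge 3$ the kernels change sign and their $L^1$ norms grow with $d$; this is exactly the obstruction the introduction of the paper alludes to. For instance, for $n=4$, \eqref{eq:kernel:4} gives $B^{(d,4)}(r)=\pi^{-d/2}\Gamma(\tfrac d2+2)\bigl(\tfrac d2+1-(\tfrac d2+2)r^2\bigr)$ on $(0,1)$. An elementary computation then gives $\lV B^{(d,4)}\rV_1=(d+2)\bigl(\tfrac{d+2}{d+4}\bigr)^{d/2}-1\sim d/e$.

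Your step asserting that the endpoint values $\lv\ball(0,1)\rv\rho^d B^{(d,n)}(\rho)$ are $O_n(1)$ ``by integrability and normalisation'' is where the argument breaks. For $n=4$ this value at $\rho=1^-$ equals $-(\tfrac d2+1)$. The identity $\int B^{(d,4)}=1$ holds only through cancellation between positive and negative parts, each of mass of order $d$ (compare Figure~\ref{fig}).

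For odd $n$ the plan fails even in a fixed dimension. $B^{(d,n)}$ has a logarithmic singularity on $\lv x\rv=1$, so $\partial_\rho B^{(d,n)}$ is not integrable near $\rho=1$. By contrast, a mixture of normalised ball indicators with finite total variation is bounded by $\lV\mu\rV/\lv\ball(0,\lv x\rv)\rv$ near the unit sphere. The problem is this singularity, not the tail, so the Poisson fallback does not help. An exact finite Gaussian mixture is also impossible, since it is real-analytic in $\lv x\rv^2>0$, while $B^{(d,n)}$ vanishes outside the unit ball for even $n$. Your scheme really succeeds only for $n=2$, where $B^{(d,2)}=\lv\ball(0,1)\rv^{-1}\ind_{\ball(0,1)}$.

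\textbf{The missing idea} is to dominate $B_*^{(n)}$ by maximal operators that are \emph{not} bounded on $L^\infty$.
\begin{itemize}
\item The paper writes $B_s^{(n)}*f$ through spherical means via \eqref{eq:averaging}. Hypergeometric identities (including analytic continuation across $t=1$ for odd $n$) put the kernel in the form $B^{(d,n)}(\sqrt t)=c_{d,n}\,t^{-d/2}(\tfrac{d}{dt})^k\bigl[t^{d/2+k}(1-t)^k\Phi(t)\bigr]$ with $k=\lfloor\tfrac{n-1}{2}\rfloor$ and $(1-t)^k\Phi(t)\ge 0$.
\item Integrating by parts $k$ times moves the derivatives onto $A_{s\sqrt t}f$. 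This yields $\lv B_s^{(n)}*f\rv\le\sum_{j=0}^k C_{k,j}A_*^{(j)}f$. The mass of the resulting positive weight is computed by testing with $f\equiv 1$, so it is dimension-free.
\item The maximal derivatives of spherical means $A_*^{(j)}$, $j\ge 1$, are unbounded on $L^\infty$. They nevertheless satisfy dimension-free $L^p$ bounds once $d$ is large in terms of $k$ and $p$ (Stein, Bourgain--Demeter, Miao--Yang--Zheng). This is precisely what circumvents the $L^1$ obstruction.
\item Small dimensions are handled with dimension-dependent constants: the Hardy--Littlewood maximal function for even $n$, and Stein's $M_*^{(1-2\delta)}$ for odd $n$, which absorbs the logarithmic singularity.
\item The vector-valued bound requires the Fefferman--Stein inequalities for $M_*^{(\alpha)}$ and $A_*^{(k)}$ proved in Section~\ref{sec:vector}.
\end{itemize}
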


We remark that the result of~\cite{kwz} only covers $q = 2$ in~\eqref{eq:vector}.

The papers cited above use a variety of advanced methods of harmonic analysis to prove~\eqref{eq:maximal:riesz} and~\eqref{eq:maximal}. Below we give a far more elementary and direct proof of~\eqref{eq:maximal}, which relies on a maximal inequality for Stein's generalised spherical means from~\cite{stein}, well-known properties of Gauss's hypergeometric function $_2F_1$, and the following explicit expression for $B^{(d, n)}$, found by Wróbel and the authors in~\cite{kkw}. Our proof of~\eqref{eq:vector} is very similar, except that Stein's maximal inequality is replaced by the corresponding Fefferman--Stein bound. To our knowledge, this vector-valued estimate does not appear in the literature, and so in Section~\ref{sec:vector} we provide a short argument, which follows closely the work of Deleaval and Kriegler~\cite{dk} on the Hardy--Littlewood maximal operator.

\begin{figure}[t]	
\begin{tabular}{cc}
\includegraphics[width=0.6\textwidth]{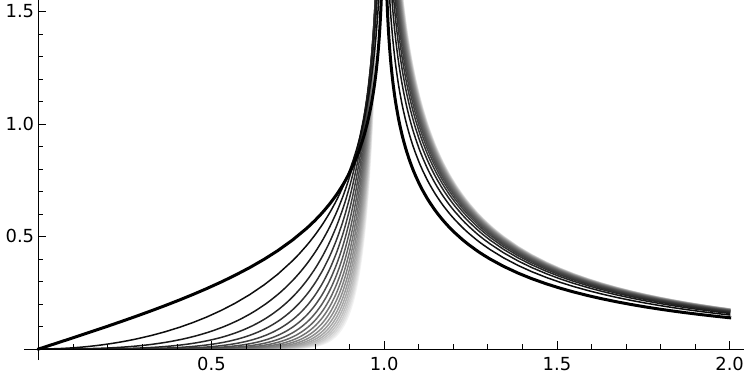} &
\includegraphics[width=0.3\textwidth]{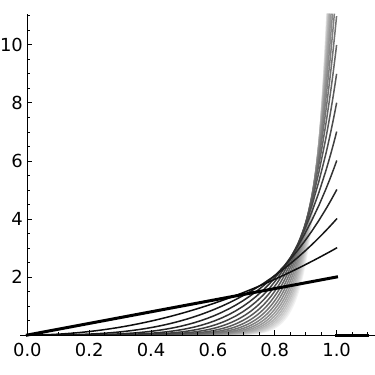} \\
\footnotesize $n = 1$ & \footnotesize $n = 2$ \\[4pt]
\includegraphics[width=0.6\textwidth]{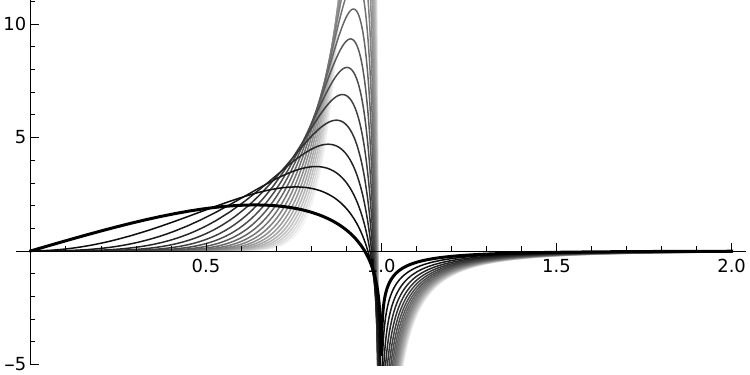} &
\includegraphics[width=0.3\textwidth]{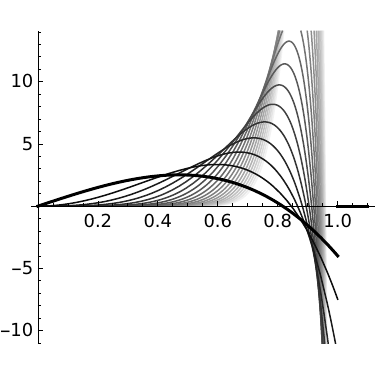} \\
\footnotesize $n = 3$ & \footnotesize $n = 4$ \\[4pt]
\includegraphics[width=0.6\textwidth]{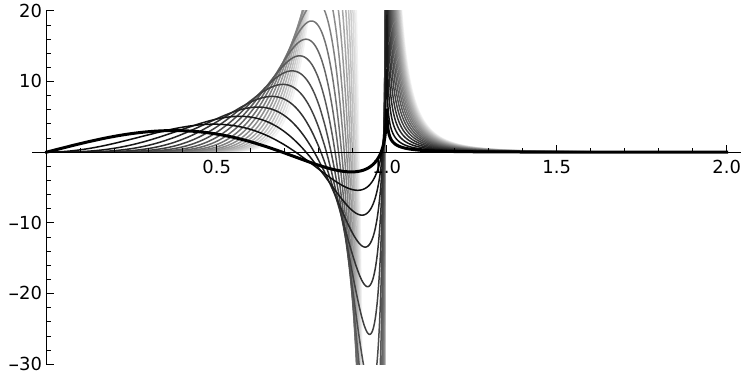} &
\includegraphics[width=0.3\textwidth]{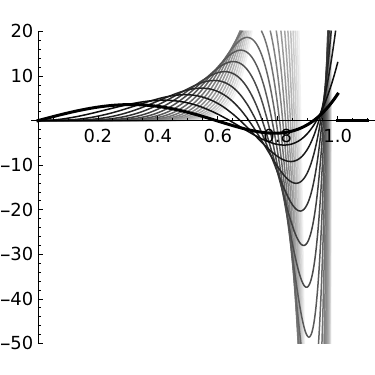} \\
\footnotesize $n = 5$ & \footnotesize $n = 6$ \\[4pt]
\includegraphics[width=0.6\textwidth]{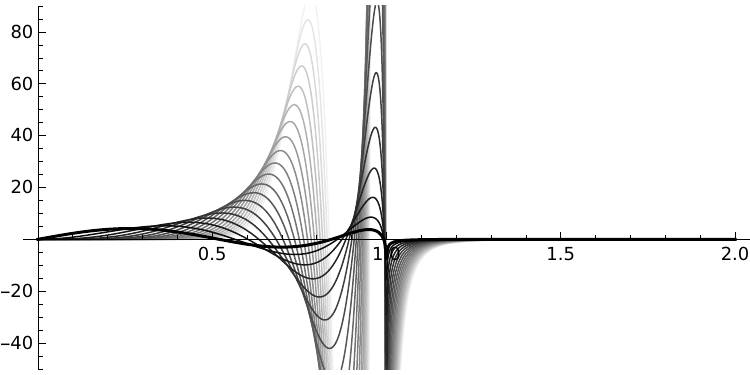} &
\includegraphics[width=0.3\textwidth]{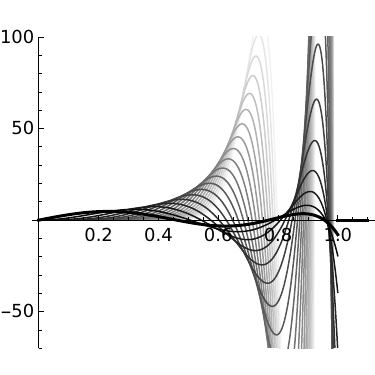} \\
\footnotesize $n = 7$ & \footnotesize $n = 8$
\end{tabular}
\caption{Volume-adjusted plots of $B^{(d, n)}(r)$ for a given $n$ and $d = 2, 3, \ldots, 21$. Black line corresponds to $d = 2$, and the brightness increases with $d$. The area under a curve corresponds to the integral of the corresponding $d$-dimensional kernel; that is, the plotted lines are the graphs of $2 \pi^{d / 2} (\Gamma(\tfrac{d}{2}))^{-1} r^{d - 1} B^{(d, n)}(r)$.}
\label{fig}
\end{figure}

\begin{proposition}[Proposition~2.2 in~\cite{kkw}]
\label{prop:kernel}
Let $d \ge 1$ and $n \ge 1$. If\/ $r \in (0, 1)$, then
\formula[eq:kernel:1]{
 B^{(d, n)}(r) & = \frac{(\Gamma(\tfrac{d + n}{2}))^2}{\pi^{d / 2} \Gamma(\tfrac{d}{2} + 1) (\Gamma(\tfrac{n}{2}))^2} \, \F{\tfrac{d + n}{2}}{1 - \tfrac{n}{2}}{\tfrac{d}{2} + 1}{r^2} ,
}
and in particular $B^{(d, n)}$ is equal to a polynomial on $(0, 1)$ for even $n$. If\/ $r \in (1, \infty)$, then we have $B^{(d, n)}(r) = 0$ for even $n$, and
\formula[eq:kernel:2]{
 B^{(d, n)}(r) & = \frac{(-1)^{(n - 1) / 2} (\Gamma(\tfrac{d + n}{2}))^2}{\pi^{d / 2 + 1} \Gamma(\tfrac{d}{2} + n)} \, \frac{1}{r^{d + n}} \, \F{\tfrac{d + n}{2}}{\tfrac{n}{2}}{\tfrac{d}{2} + n}{\frac{1}{r^2}}
}
when $n$ is odd (see Figure~\ref{fig}).
\end{proposition}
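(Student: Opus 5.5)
The plan is to compute $\fourier B^{(d,n)}$ from the factorisation~\eqref{eq:factorisation} and then invert it. The point is that the truncated kernel $\ind_{\lv y \rv > s} P(y) \lv y \rv^{-d-n}$ is the full kernel $P(y)\lv y\rv^{-d-n}$ (whose Fourier transform is $c_{d,n} P(-i\xi)\lv\xi\rv^{-n}$) minus its restriction to the ball $\ball(0,s)$. By scaling it suffices to take $s = 1$.

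First, by the Funk--Hecke formula, the Fourier transform of $\ind_{\ball(0,1)}(y) P(y) \lv y\rv^{-d-n}$ (taken in the principal value sense) equals $P(\xi/\lv\xi\rv)$ times a radial profile. That profile is an integral $\int_0^1 r^{-1} J_{d/2+n-1}(r\lv\xi\rv)(r\lv\xi\rv)^{-(d/2-1)} \, dr$, up to constants. Hence $\fourier B^{(n)}_1(\xi) = 1 - m(\lv\xi\rv)$, where $m(\rho)$ is an explicit integral of a Bessel function. Equivalently, and more conveniently, $\fourier B_1^{(n)}(\xi) = c \int_1^\infty t^{-1} \tilde J(t\lv\xi\rv)\,dt$ with $\tilde J(u) = u^{-(d/2-1)} J_{d/2+n-1}(u)$. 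This is the Fourier transform of a radial function expressed as a superposition.

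Second, I would invert. Since $B^{(d,n)}$ is radial, $B^{(d,n)}(r) = (2\pi)^{-d/2} r^{1-d/2}\int_0^\infty \fourier B(\rho)\, J_{d/2-1}(r\rho)\rho^{d/2}\, d\rho$. Substituting the integral representation and exchanging the order of integration (justified by an Abel-type regularisation $e^{-\eps\rho}$) reduces everything to a discontinuous Weber--Schafheitlin integral of the form $\int_0^\infty J_{d/2+n-1}(t\rho)J_{d/2-1}(r\rho)\rho^{-n+1}\,d\rho$ (after accounting for powers). The classical evaluation gives ${}_2F_1$ expressions whose form depends on whether $r<t$ or $r>t$. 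For $r<1\le t$ only one regime occurs, and integrating in $t\in(1,\infty)$ yields a ${}_2F_1$ in $r^2$. Keeping track of the Gamma constants should reproduce~\eqref{eq:kernel:1}, with parameters $\frac{d+n}{2}$, $1-\frac n2$ and $\frac d2+1$. When $n$ is even, $1-\frac n2$ is a nonpositive integer, so the series terminates and gives a polynomial.

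For $r>1$, the $t$-integral splits at $t=r$. Here the Weber--Schafheitlin integral carries the factor $1/\Gamma(1-\frac n2 \cdots)$, or a $\sin$ factor, which vanishes for even $n$. This yields $B^{(d,n)}(r)=0$ in that case, consistent with the known identity that for even $n$ the truncated kernel is exactly $R^{(P)}$ composed with a compactly supported kernel. For odd $n$ the surviving term is $(-1)^{(n-1)/2}\pi^{-1}$ times a ${}_2F_1$ in $1/r^2$, which should be~\eqref{eq:kernel:2}. As a fallback, one can instead verify the formulas by computing the Fourier transform of the claimed $B^{(d,n)}$ directly via termwise integration of the hypergeometric series against $J_{d/2-1}$, and comparing with $1-m$.

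The main obstacle is the bookkeeping in the second step. This covers the exchange of integrals, the choice of the correct Weber--Schafheitlin branch at the parameter values where the integral is only conditionally convergent (the exponent of $\rho$ is small for large $n$), and simplifying the resulting $t$-integrals of ${}_2F_1$ via Euler/Pfaff transformations and Gauss's summation into the stated normalised form. I would handle convergence by first proving the identity for $n=1,2$ and then extending it. That extension would use either a recursion in $n$ (via contiguous relations) or analytic continuation in a complex parameter replacing $n$ in the exponent $-d-n$ while keeping $P$ fixed.
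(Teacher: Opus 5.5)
Your first step matches the route described for \cite{kkw}. Bochner's relation (your Funk--Hecke computation) gives $\fourier B_1^{(n)}(\xi) = m(\lv \xi \rv)$ with
$m(\rho) = c_0 \int_\rho^\infty u^{-d/2} J_{d/2+n-1}(u)\,du = c_0 \rho^{1-d/2} \int_1^\infty t^{-d/2} J_{d/2+n-1}(t\rho)\,dt$ and $c_0 = 2^{d/2}\Gamma(\tfrac{d+n}{2})/\Gamma(\tfrac{n}{2})$.

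There is a genuine gap in the inversion. You insert this into $B^{(d,n)}(r) = (2\pi)^{-d/2} r^{1-d/2} \int_0^\infty m(\rho) J_{d/2-1}(r\rho) \rho^{d/2}\,d\rho$ and interchange. The powers then combine to $\rho^{1-d/2}\rho^{d/2} = \rho$, not $\rho^{1-n}$, so the inner integral is $\int_0^\infty J_{d/2+n-1}(t\rho) J_{d/2-1}(r\rho)\,\rho\,d\rho$. This is the excluded endpoint $\lambda = -1$ of $\int_0^\infty J_\mu(a\rho) J_\nu(b\rho) \rho^{-\lambda}\,d\rho$, which requires $\lambda > -1$. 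The integrand does not decay, so the integral diverges for every $t$. After Abel summation it is only a distribution in $t$, with a singular part at $t = r$ in addition to the continued Weber--Schafheitlin expressions for $t \ne r$.

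For $r < 1$ this is harmless, because $t \ge 1 > r$, and your computation does reproduce \eqref{eq:kernel:1}. For $r > 1$ the branchwise evaluation breaks down. The vanishing factor $1/\Gamma(-\tfrac{n}{2})$ sits only on the branch $t < r$; the branch $t > r$ carries $1/\Gamma(\tfrac{n}{2})$. For $n = 2$, the recurrence $J_{\nu+2}(x) = \tfrac{2(\nu+1)}{x} J_{\nu+1}(x) - J_\nu(x)$ shows the inner integral equals $d\, r^{d/2-1} t^{-d/2-1} \ind_{(r,\infty)}(t) - r^{-1}\delta(t-r)$. Dropping the point mass, as a pointwise branch evaluation does, gives $B^{(d,2)}(r) = \pi^{-d/2}\Gamma(\tfrac{d}{2}+1) r^{-d}$ for $r > 1$. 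That is wrong, and not even integrable. For odd $n$, both branches have a non-integrable singularity of order $\lv t - r \rv^{-1}$, so the two pieces of your split $t$-integral diverge separately. Treating $n = 1, 2$ first does not help, since the problem already occurs there.

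The fix is to avoid the interchange. Use $m'(\rho) = -c_0 \rho^{-d/2} J_{d/2+n-1}(\rho)$ and $\tfrac{d}{d\rho}\bigl[\rho^{d/2} J_{d/2}(r\rho)\bigr] = r \rho^{d/2} J_{d/2-1}(r\rho)$, and integrate by parts once in the inversion formula. The boundary terms vanish, since $m(\rho) = O(\rho^{-(d+1)/2})$. This gives
\[
 B^{(d,n)}(r) = \frac{\Gamma(\tfrac{d+n}{2})}{\pi^{d/2}\Gamma(\tfrac{n}{2})} \, r^{-d/2} \int_0^\infty J_{d/2+n-1}(\rho) J_{d/2}(r\rho)\,d\rho ,
\]
which is a convergent discontinuous Weber--Schafheitlin integral with $\lambda = 0$ for $r \ne 1$. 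Its branch $r < 1$ is exactly \eqref{eq:kernel:1}. Its branch $r > 1$ equals
$\frac{\sin(\pi n/2)}{\pi} \cdot \frac{(\Gamma(\frac{d+n}{2}))^2}{\pi^{d/2}\Gamma(\frac{d}{2}+n)} \, r^{-d-n} \, {_2F_1}(\tfrac{d+n}{2}, \tfrac{n}{2}; \tfrac{d}{2}+n; r^{-2})$,
because $1/(\Gamma(\tfrac{n}{2})\Gamma(1-\tfrac{n}{2})) = \sin(\tfrac{\pi n}{2})/\pi$. This vanishes for even $n$ and reduces to \eqref{eq:kernel:2} for odd $n$. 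No regularisation, recursion in $n$, or continuation in the exponent is needed.
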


The proof of Proposition~\ref{prop:kernel} given in~\cite{kkw} is fairly elementary and direct. In Proposition~2.1 therein, Bochner's relation is used to determine the Fourier symbol of the truncated Riesz transform $R_{\smash{s}}^{(P)}$, and so, via~\eqref{eq:factorisation}, the Fourier transform of the kernel $B_{\smash{s}}^{(n)}$. Then, in Proposition~2.2, well-known properties of Bessel and hypergeometric functions are applied to invert the Fourier transform.

We refer to~\cite{kkw} for a more detailed discussion, including the impossibility of proving Theorem~\ref{thm:maximal} by estimating $B_{\smash{*}}^{(n)}$ using maximal operators which are bounded on $L^\infty(\R^d)$, fine properties of the kernel $B_{\smash{s}}^{(n)}$, and contractivity of $(R^{(P)})^{-1} R_{\smash{s}}^{(P)}$ (that is, the convolution operator with kernel $B_{\smash{s}}^{(n)}$) on $L^2(\R^d)$.

\begin{notation}
Throughout the paper we assume that $d \ge 1$ is the dimension, $n \ge 1$ is the order of the Riesz transform, and $p \in (1, \infty)$. Note that if $d = 1$, then there is only one first-order Riesz transform, the Hilbert transform, and there are no higher-order Riesz transforms. Thus, for $n \ge 2$ we only consider $d \ge 2$.

By $C$ we denote a generic positive constant. If $C$ depends on parameters, such as $d$, $n$ or $p$, we always list them in the subscript. The value of $C$ may change even within a single equation or inequality, so, for example, $C + C = C$.

By $\ball(x, r)$ we denote the Euclidean ball with radius $r$, centred at $x$, and $\sph$ denotes the unit sphere in $\R^d$. In inline equations, we write ${_2F_1}(a, b; c; t)$ for the hypergeometric function. We denote by $B_{\smash{s}}^{(n)}$ the kernel on $\R^d$ introduced in~\eqref{eq:factorisation}, and by $B^{(d, n)}$ the radial profile of $B_{\smash{1}}^{(n)}$ (denoted by $b_n$ in~\cite{kwz}). An explicit expression for $B^{(d, n)}$ is given in Proposition~\ref{prop:kernel}.

The Fourier transform of an integrable function $f$ on $\R^d$ is defined by
\formula{
 \fourier f(\xi) & = \int_{\R^d} e^{i \xi \cdot x} f(x) dx ,
}
and the operator $\fourier$ is extended continuously to $L^2(\R^d)$.
\end{notation}

\begin{structure}
In Section~\ref{sec:stein} we recall the definition of Stein's generalised spherical means $M_{\smash{r}}^{(\alpha)}$ (denoted $B_{\smash{r}}^\alpha$ in~\cite{stein}), and discuss the closely related concept of derivatives of spherical means $A_{\smash{r}}^{(k)}$. We also recall Stein's maximal inequality for $M_{\smash{r}}^{(\alpha)}$ and $A_{\smash{r}}^{(k)}$.

The proof of Theorem~\ref{thm:maximal} is given in Section~\ref{sec:proof}. More precisely, in Sections~\ref{sec:ball} and~\ref{sec:complement} we provide other expressions for the kernel $B^{(d, n)}$ in $(0, 1)$ and in $(1, \infty)$, respectively. Positivity of a certain hypergeometric function is studied in Section~\ref{sec:positivity}. Next, we prove Theorem~\ref{thm:maximal} for even $n$ and large $d$ in Section~\ref{sec:even:high}, and for even $n$ and small $d$ in Section~\ref{sec:even:low}. Similarly, Section~\ref{sec:odd:high} contains the proof of Theorem~\ref{thm:maximal} for odd $n$ and large $d$, and the case of odd $n$ and small $d$ is worked out in Section~\ref{sec:odd:low}.

In Section~\ref{sec:vector} we prove the Fefferman--Stein inequality for generalised spherical means (Theorem~\ref{thm:vector:generalised}) and then we use Theorem~\ref{thm:vector:generalised} and some results from \cite{k} to prove the Fefferman--Stein inequality for derivatives of spherical means (Theorem~\ref{thm:vector:stein}).
\end{structure}

%
%

\section{Stein's maximal inequality for derivatives of spherical averages}
\label{sec:stein}


For a Schwartz function $f$ on $\R^d$ and $r > 0$ we denote by $A_r f(x)$ the spherical average of~$f$:
\formula{
 A_r f(x) & = \int_{\sph} f(x + r y) \sigma(d y) ,
}
where $\sigma$ is the normalised surface measure. For $k = 0, 1, 2, \ldots$ let $A_r^{(k)} f$ denote the $k$th order derivative of the spherical mean with respect to the radius $r$, normalised by the factor $r^k$:
\formula[eq:derivatives]{
 A_r^{(k)} f(x) & = r^k \biggl(\frac{d}{dr}\biggr)^k \bigl[A_r f(x)\bigr] .
}
Then the corresponding maximal function
\formula{
 A_*^{(k)} f(x) & = \sup \bigl\{ \lv A_r^{(k)} f(x) \rv : r \in (0, \infty) \bigr\}
}
is bounded on $L^p(\R^d)$ with a constant independent of the dimension, that is,
\formula[eq:maximal:stein]{
 \lV A_*^{(k)} f \rV_p & \le C_{k, p} \lV f \rV_p ,
}
provided that
\formula[eq:range:stein]{
 k & = 1, 2, \ldots, \qquad d \ge 2 k + 3 , \qquad \frac{d}{d - k - 1} < p < \frac{d - 2}{k} .
}
The above estimate was essentially proved by Stein in~\cite{stein}. Later, Bourgain in~\cite{bourgain}, Bourgain and Demeter in~\cite{bd}, and Miao, Yang and Zheng in~\cite{myz} extended the range of parameters for which~\eqref{eq:maximal:stein} holds to
\formula[eq:range:bourgain:demeter]{
 k & = 1, 2, \ldots, \qquad d \ge 2 k + 3 , \qquad \frac{d}{d - k - 1} < p < \frac{d - 1}{k} ,
}
or $k = 0$, $d \ge 2$ and $p > \tfrac{d}{d - 1}$.

More precisely, the authors cited above studied Stein's generalised spherical means $M_{\smash{r}}^{(\alpha)}$, defined by
\formula[eq:generalised]{
 M_r^{(\alpha)} f(x) & = \frac{1}{\Gamma(\alpha)} \int_{\ball(0, 1)} f(x + r y) (1 - \lv y \rv^2)^{\alpha - 1} dy
}
when $\alpha > 0$, and extended to arbitrary complex $\alpha$ by holomorphic continuation. Stein's result (Theorem~2 in~\cite{stein}) in fact states that the associated maximal function
\formula{
 M_*^{(\alpha)} f(x) & = \sup_{r \in (0, \infty)} \lv M_r^{(\alpha)} f(x) \rv
}
satisfies the maximal inequality
\formula[eq:maximal:generalised]{
 \lV M_*^{(\alpha)} f \rV_p & \le C_{d, \alpha, p} \lV f \rV_p
}
for every Schwartz function $f$ on $\R^d$, provided that
\formula[eq:range:generalised]{
 d & \ge 1 , \qquad p \in (1, \infty] , \qquad \alpha > -\min\biggl\{d - 1 - \frac{d}{p} \, , \, \frac{d - 2}{p}\biggr\} ,
}
with the convention that $1 / \infty = 0$. The range of admissible $\alpha$ was appropriately extended by Bourgain, Demeter, Miao, Yang and Zheng.

Although not identical, the operators $M_{\smash{r}}^{(-k)}$ are closely related to the derivatives $A_{\smash{r}}^{(k)}$ of spherical means defined in~\eqref{eq:derivatives}. This connection is discussed in detail in the companion paper~\cite{k}, where~\eqref{eq:maximal:stein} with assumptions~\eqref{eq:range:stein} is given as Theorem~I, and the extension to~\eqref{eq:range:bourgain:demeter} is Theorem~II.

%
%

\section{Proof of Theorem~\texorpdfstring{\ref{thm:maximal}}{1}}
\label{sec:proof}

Integration in spherical coordinates allows us to express the convolution with the kernel $B_{\smash{s}}^{(n)}(x) = s^{-d} B^{(d, n)}(s^{-1} \lv x \rv)$ in terms of the spherical means:
\formula*[eq:averaging]{
 B_s^{(n)} * f(x) & = s^{-d} \int_{\R^d} f(x + y) B^{(d, n)}(s^{-1} \lv y \rv) dy \\
 & = \int_{\R^d} f(x + s z) B^{(d, n)}(\lv z \rv) dz \\
 & = \frac{2 \pi^{d / 2}}{\Gamma(\tfrac{d}{2})} \int_0^\infty r^{d - 1} B^{(d, n)}(r) A_{s r} f(x) dr \\
 & = \frac{\pi^{d / 2}}{\Gamma(\tfrac{d}{2})} \int_0^\infty t^{d / 2 - 1} B^{(d, n)}(\sqrt{t}) A_{s \sqrt{t}} f(x) dt
}
for every bounded Borel function $f$. In our proof of Theorem~\ref{thm:maximal}, we express the right-hand side in terms of the derivatives of spherical means of $f$.


\subsection{The kernel in the ball}
\label{sec:ball}

By \nist{15.5}{4} in~\cite{nist} we have
\formula{
 \frac{1}{\Gamma(c + k)} \, \biggl(\frac{d}{d t}\biggr)^k \biggl[ t^{c + k - 1} \F{a}{b}{c + k}{t} \biggr] & = \frac{1}{\Gamma(c)} \, t^{c - 1} \F{a}{b}{c}{t} .
}
Using this identity with $a = \tfrac{d + n}{2}$, $b = 1 - \tfrac{n}{2}$ and $c = \tfrac{d}{2} + 1$, we transform the expression~\eqref{eq:kernel:1} for the kernel $B^{(d, n)}$ to
\formula[eq:kernel:3]{
 B^{(d, n)}(\sqrt{t}) & = \frac{(\Gamma(\tfrac{d + n}{2}))^2}{\pi^{d / 2} \Gamma(\tfrac{d}{2} + k + 1) (\Gamma(\tfrac{n}{2}))^2} \, \frac{1}{t^{d / 2}} \biggl(\frac{d}{d t}\biggr)^k \biggl[t^{d / 2 + k} \F{\tfrac{d + n}{2}}{1 - \tfrac{n}{2}}{\tfrac{d}{2} + k + 1}{t}\biggr]
}
for $t \in (0, 1)$.

When $n \ge 2$ is even, we have $B^{(d, n)}(r) = 0$ for $r \in (1, \infty)$, and for $r \in (0, 1)$, we use~\eqref{eq:kernel:3} with $k = \tfrac{n}{2} - 1 \ge 0$. Substituting $n = 2 k + 2$, we obtain
\formula{
 B^{(d, n)}(\sqrt{t}) & = \frac{\Gamma(\tfrac{d}{2} + k + 1)}{\pi^{d / 2} (\Gamma(k + 1))^2} \, \frac{1}{t^{d / 2}} \biggl(\frac{d}{d t}\biggr)^k \biggl[t^{d / 2 + k} \F{\tfrac{d}{2} + k + 1}{-k}{\tfrac{d}{2} + k + 1}{t}\biggr]
}
for $t \in (0, 1)$. Since ${_2F_1}(a, b; a; t) = (1 - t)^{-b}$ (\nist{15.4}{6} in~\cite{nist}), we have
\formula[eq:kernel:4]{
 B^{(d, n)}(\sqrt{t}) & = \frac{\Gamma(\tfrac{d}{2} + k + 1)}{\pi^{d / 2} (\Gamma(k + 1))^2} \, \frac{1}{t^{d / 2}} \biggl(\frac{d}{d t}\biggr)^k \bigl[t^{d / 2 + k} (1 - t)^k \bigr] \ind_{(0, 1)}(t).
}

For odd $n \ge 1$, we substitute $k = \tfrac{n - 1}{2} \ge 0$, or $n = 2 k + 1$, into~\eqref{eq:kernel:3}: for $t \in (0, 1)$,
\formula{
 B^{(d, n)}(\sqrt{t}) & = \frac{(\Gamma(\tfrac{d}{2} + k + \tfrac{1}{2}))^2}{\pi^{d / 2} \Gamma(\tfrac{d}{2} + k + 1) (\Gamma(k + \tfrac{1}{2}))^2} \times {} \\
 & \qquad \times \frac{1}{t^{d / 2}} \biggl(\frac{d}{d t}\biggr)^k \biggl[t^{d / 2 + k} \F{\tfrac{d}{2} + k + \tfrac{1}{2}}{\tfrac{1}{2} - k}{\tfrac{d}{2} + k + 1}{t}\biggr] .
}
By \nist{15.8}{1} in~\cite{nist}, we have
\formula[eq:hypergeometric:duality]{
 \F{a}{b}{c}{t} & = (1 - t)^{c - a - b} \F{c - a}{c - b}{c}{t} ,
}
and hence
\formula*[eq:kernel:5]{
 B^{(d, n)}(\sqrt{t}) & = \frac{(\Gamma(\tfrac{d}{2} + k + \tfrac{1}{2}))^2}{\pi^{d / 2} \Gamma(\tfrac{d}{2} + k + 1) (\Gamma(k + \tfrac{1}{2}))^2} \times {} \\
 & \qquad \times \frac{1}{t^{d / 2}} \biggl(\frac{d}{d t}\biggr)^k \biggl[t^{d / 2 + k} (1 - t)^k \F{\tfrac{1}{2}}{\tfrac{d}{2} + 2 k + \tfrac{1}{2}}{\tfrac{d}{2} + k + 1}{t}\biggr]
}
for $t \in (0, 1)$.


\subsection{The kernel in the complement of the ball}
\label{sec:complement}

Suppose that $n \ge 1$ is odd, $n = 2 k + 1$. The hypergeometric function ${_2F_1}(a, b; c; t)$ extends to a holomorphic function of $t \in \C \setminus [1, \infty)$, with a continuous boundary limit on $(1, \infty)$ when approached from the upper complex half-plane (see~\href{https://dlmf.nist.gov/15.2.ii}{\S 15.2(ii)} in~\cite{nist}). By \nist{15.10}{25} (or \nist{15.8}{2}) in~\cite{nist}, for $t \in \C$ with $\im t > 0$, we have
\formula*[eq:hypergeometric:reciprocity]{
 \F{a}{b}{c}{t} & = \frac{\Gamma(c) \Gamma(b - a)}{\Gamma(b) \Gamma(c - a)} \, \frac{e^{i a \pi}}{t^a} \, \F{a}{a - c + 1}{a - b + 1}{\frac{1}{t}} \\
 & \qquad + \frac{\Gamma(c) \Gamma(a - b)}{\Gamma(a) \Gamma(c - b)} \, \frac{e^{i b \pi}}{t^b} \, \F{b}{b - c + 1}{b - a + 1}{\frac{1}{t}} ,
}
where complex powers assume principal branches. The right-hand side extends continuously to $t \in (1, \infty)$, providing an expression for the boundary limit ${_2F_1}(a, b; c; t + 0 i)$ approached from the upper complex half-plane.

Note that~\eqref{eq:hypergeometric:reciprocity} is valid only when $a - b$ is not an integer, otherwise the right-hand side is not well-defined; we refer to \nist{15.8}{8} in~\cite{nist} for a detailed discussion.

If $b = 1 - \tfrac{n}{2}$ and $n$ is odd, $a, c \in \R$, and $a + \tfrac{n}{2}$ is not an integer, then the second summand on the right-hand side of~\eqref{eq:hypergeometric:reciprocity} is purely imaginary for $t \in (1, \infty)$. In this case,
\formula{
 \re \F{a}{1 - \tfrac{n}{2}}{c}{t + 0 i} & = \frac{\Gamma(c) \Gamma(1 - a - \tfrac{n}{2})}{\Gamma(1 - \tfrac{n}{2}) \Gamma(c - a)} \, \frac{\cos(a \pi)}{t^a} \, \F{a}{a - c + 1}{a + \tfrac{n}{2}}{\frac{1}{t}} .
}
By Euler's reflection formula $\Gamma(z) \Gamma(1 - z) = \pi / \sin(\pi z)$ (\nist{5.5}{3} in~\cite{nist}) and periodicity,
\formula{
 \Gamma(1 - \tfrac{n}{2} - a) \cos(a \pi) & = \frac{\pi \cos(a \pi)}{\Gamma(a + \tfrac{n}{2}) \sin((a + \tfrac{n}{2}) \pi)} = \frac{(-1)^{(n - 1) / 2} \pi}{\Gamma(a + \tfrac{n}{2})} .
}
It follows that for $t \in (1, \infty)$,
\formula[eq:hypergeometric:reciprocity:real]{
 \re \F{a}{1 - \tfrac{n}{2}}{c}{t + 0 i} & = \frac{(-1)^{(n - 1) / 2} \pi \, \Gamma(c)}{\Gamma(a + \tfrac{n}{2}) \Gamma(1 - \tfrac{n}{2}) \Gamma(c - a)} \, \frac{1}{t^a} \, \F{a}{a - c + 1}{a + \tfrac{n}{2}}{\frac{1}{t}} .
}
While we proved this identity only when $a + \tfrac{n}{2}$ is not an integer, both sides define continuous functions of $a \in \R$, and so in fact the equality holds for all $a \in \R$.

We use~\eqref{eq:hypergeometric:reciprocity:real} with $a = \tfrac{d + n}{2}$ and $c = \tfrac{d}{2} + 1$: for $t \in (1, \infty)$,
\formula{
 \re \F{\tfrac{d + n}{2}}{1 - \tfrac{n}{2}}{\tfrac{d}{2} + 1}{t + 0 i} & = \frac{(-1)^{(n - 1) / 2} \pi \, \Gamma(\tfrac{d}{2} + 1)}{(\Gamma(1 - \tfrac{n}{2}))^2 \Gamma(\tfrac{d}{2} + n)} \, \frac{1}{t^{(d + n) / 2}} \, \F{\tfrac{d + n}{2}}{\tfrac{n}{2}}{\tfrac{d}{2} + n}{\frac{1}{t}} .
}
Therefore, the expression~\eqref{eq:kernel:2} for the kernel $B^{(d, n)}(r)$ for $r \in (1, \infty)$ can be written as
\formula{
 B^{(d, n)}(\sqrt{t}) & = \frac{(-1)^{(n - 1) / 2} (\Gamma(\tfrac{d + n}{2}))^2}{\pi^{d / 2 + 1} \Gamma(\tfrac{d}{2} + n)} \, \frac{1}{t^{(d + n) / 2}} \, \F{\tfrac{d + n}{2}}{\tfrac{n}{2}}{\tfrac{d}{2} + n}{\frac{1}{t}} \\
 & = \frac{(\Gamma(1 - \tfrac{n}{2}))^2 (\Gamma(\tfrac{d + n}{2}))^2}{\pi^{d / 2 + 2} \Gamma(\tfrac{d}{2} + 1)} \, \re \F{\tfrac{d + n}{2}}{1 - \tfrac{n}{2}}{\tfrac{d}{2} + 1}{t + 0 i}
}
for $t \in (1, \infty)$. By Euler's reflection formula, $\Gamma(1 - \tfrac{n}{2}) \Gamma(\tfrac{n}{2}) = (-1)^{(n - 1) / 2} \pi$, and therefore
\formula[eq:kernel:6]{
 B^{(d, n)}(\sqrt{t}) & = \frac{(\Gamma(\tfrac{d + n}{2}))^2}{\pi^{d / 2} \Gamma(\tfrac{d}{2} + 1) (\Gamma(\tfrac{n}{2}))^2} \, \re \F{\tfrac{d}{2} + k + \tfrac{1}{2}}{\tfrac{1}{2} - k}{\tfrac{d}{2} + 1}{t + 0 i} .
}
The above argument proves this identity for $t \in (1, \infty)$, and by~\eqref{eq:kernel:1} it also holds for $t \in (0, 1)$. Hence, it is true in the full range $t > 0$, $t \ne 1$.

Equivalence of the two expressions~\eqref{eq:kernel:1} and~\eqref{eq:kernel:5} (with $r = \sqrt{t}$) for the kernel $B^{(d, n)}$ on the interval $(0, 1)$ implies that
\formula{
 & \frac{(\Gamma(\tfrac{d + n}{2}))^2}{\pi^{d / 2} \Gamma(\tfrac{d}{2} + 1) (\Gamma(\tfrac{n}{2}))^2} \, \F{\tfrac{d + n}{2}}{1 - \tfrac{n}{2}}{\tfrac{d}{2} + 1}{t} \\
 & \qquad = \frac{(\Gamma(\tfrac{d}{2} + k + \tfrac{1}{2}))^2}{\pi^{d / 2} \Gamma(\tfrac{d}{2} + k + 1) (\Gamma(k + \tfrac{1}{2}))^2} \, \frac{1}{t^{d / 2}} \biggl(\frac{d}{d t}\biggr)^k \biggl[t^{d / 2 + k} (1 - t)^k \F{\tfrac{1}{2}}{\tfrac{d}{2} + 2 k + \tfrac{1}{2}}{\tfrac{d}{2} + k + 1}{t}\biggr]
}
for $t \in (0, 1)$. By the uniqueness of the holomorphic extension, this identity holds for all $t \in \C \setminus [1, \infty)$. We take the real parts of both sides and pass to the boundary limit at $t \in (1, \infty)$. By~\eqref{eq:kernel:6}, the boundary limit of the left-hand side is equal to $B^{(d, n)}(\sqrt{t})$, and hence
\formula*[eq:kernel:7]{
 B^{(d, n)}(\sqrt{t}) & = \frac{(\Gamma(\tfrac{d}{2} + k + \tfrac{1}{2}))^2}{\pi^{d / 2} \Gamma(\tfrac{d}{2} + k + 1) (\Gamma(k + \tfrac{1}{2}))^2} \times {} \\
 & \qquad \times \frac{1}{t^{d / 2}} \biggl(\frac{d}{d t}\biggr)^k \biggl[t^{d / 2 + k} (1 - t)^k \re \F{\tfrac{1}{2}}{\tfrac{d}{2} + 2 k + \tfrac{1}{2}}{\tfrac{d}{2} + k + 1}{t + 0 i}\biggr]
}
for $t \in (1, \infty)$. However, by~\eqref{eq:kernel:5}, the above equality also holds for $t \in (0, 1)$, and so it is valid in the full range $t > 0$, $t \ne 1$.


\subsection{Positivity property of the kernel}
\label{sec:positivity}

We claim that for every $t > 0$, $t \ne 1$,
\formula[eq:kernel:positive]{
 (1 - t)^k \re \F{\tfrac{1}{2}}{\tfrac{d}{2} + 2 k + \tfrac{1}{2}}{\tfrac{d}{2} + k + 1}{t + 0 i} & \ge 0 .
}
When $t \in (0, 1)$, this inequality is an immediate consequence of the definition of the hypergeometric function as a power series (\nist{15.2}{1} in~\cite{nist}): all coefficients of the power series are nonnegative. If $t \in (1, \infty)$, we transform the left-hand side using~\eqref{eq:hypergeometric:reciprocity:real} with $n = 1$, $a = \tfrac{d}{2} + 2 k + \tfrac{1}{2}$ and $c = \tfrac{d}{2} + k + 1$:
\formula{
 & \re \F{\tfrac{1}{2}}{\tfrac{d}{2} + 2 k + \tfrac{1}{2}}{\tfrac{d}{2} + k + 1}{t + 0 i} \\
 & \qquad = \frac{\pi \, \Gamma(\tfrac{d}{2} + k + 1)}{\Gamma(\tfrac{d}{2} + 2 k + 1) \Gamma(\tfrac{1}{2}) \Gamma(\tfrac{1}{2} - k)} \, \frac{1}{t^{(d + 4 k + 1) / 2}} \, \F{\tfrac{d}{2} + 2 k + \tfrac{1}{2}}{k + \tfrac{1}{2}}{\tfrac{d}{2} + 2 k + 1}{\frac{1}{t}} .
}
Using Euler's reflection formula $\Gamma(\tfrac{1}{2} - k) \Gamma(k + \tfrac{1}{2}) = (-1)^k \pi$ and~\eqref{eq:hypergeometric:duality}, this simplifies to
\formula{
 & (1 - t)^k \re \F{\tfrac{1}{2}}{\tfrac{d}{2} + 2 k + \tfrac{1}{2}}{\tfrac{d}{2} + k + 1}{t + 0 i} \\
 & \qquad = (1 - t)^k \, \frac{(-1)^k \Gamma(k + \tfrac{1}{2}) \Gamma(\tfrac{d}{2} + k + 1)}{\sqrt{\pi} \, \Gamma(\tfrac{d}{2} + 2 k + 1)} \, \frac{1}{t^{(d + 4 k + 1) / 2}} \, \frac{1}{(1 - t^{-1})^k} \, \F{\tfrac{1}{2}}{\tfrac{d}{2} + k + \tfrac{1}{2}}{\tfrac{d}{2} + 2 k + 1}{\frac{1}{t}} \\
 & \qquad = \frac{\Gamma(k + \tfrac{1}{2}) \Gamma(\tfrac{d}{2} + k + 1)}{\sqrt{\pi} \, \Gamma(\tfrac{d}{2} + 2 k + 1)} \, \frac{1}{t^{(d + 2 k + 1) / 2}} \, \F{\tfrac{1}{2}}{\tfrac{d}{2} + k + \tfrac{1}{2}}{\tfrac{d}{2} + 2 k + 1}{\frac{1}{t}} .
}
The right-hand side is positive when $t \in (1, \infty)$ by the definition of the hypergeometric function, and our claim follows.


\subsection{Even order in high dimensions}
\label{sec:even:high}

Suppose that $n \ge 2$ is even, $n = 2 k + 2$, and let $f$ be a Schwartz function on $\R^d$. Combining formulae~\eqref{eq:averaging} and~\eqref{eq:kernel:4}, we find that
\formula{
 B_s^{(n)} * f(x) & = \frac{\Gamma(\tfrac{d}{2} + k + 1)}{\Gamma(\tfrac{d}{2}) (\Gamma(k + 1))^2} \int_0^1 \frac{1}{t} \biggl(\frac{d}{d t}\biggr)^k \bigl[t^{d / 2 + k} (1 - t)^k \bigr] A_{s \sqrt{t}} f(x) dt .
}
Integration by parts repeated $k$ times leads to
\formula{
 B_s^{(n)} * f(x) & = \frac{(-1)^k \Gamma(\tfrac{d}{2} + k + 1)}{\Gamma(\tfrac{d}{2}) (\Gamma(k + 1))^2} \int_0^1 t^{d / 2 + k} (1 - t)^k \biggl(\frac{d}{d t}\biggr)^k \biggl[\frac{1}{t} \, A_{s \sqrt{t}} f(x) \biggr] dt ;
}
since $A_{\smash{r}}^{(j)} f(x)$ has a finite limit as $r \to 0^+$ for every $j$, all boundary terms are easily proved to be zero. If we expand the derivatives, we get
\formula{
 B_s^{(n)} * f(x) & = \frac{\Gamma(\tfrac{d}{2} + k + 1)}{\Gamma(\tfrac{d}{2})} \int_0^1 t^{d / 2 - 1} (1 - t)^k \biggl(\sum_{j = 0}^k C_{k, j} A_{s \sqrt{t}}^{(j)} f(x)\biggr) dt
}
for appropriate constants $C_{k, j}$ (which also include the factor $(-1)^k (\Gamma(k + 1))^{-2}$). However, $A_{\smash{r}}^{(j)} f(x)$ is bounded by the corresponding maximal function $A_{\smash{*}}^{(j)} f(x)$. Thus,
\formula{
 \lv B_s^{(n)} * f(x) \rv & \le \frac{\Gamma(\tfrac{d}{2} + k + 1)}{\Gamma(\tfrac{d}{2})} \, \biggl(\int_0^1 t^{d / 2 - 1} (1 - t)^k dt\biggr) \biggl(\sum_{j = 0}^k C_{k, j} A_*^{(j)} f(x)\biggr) .
}
By the beta integral (\nist{5.12}{1} in~\cite{nist}),
\formula{
 \lv B_s^{(n)} * f(x) \rv & \le \Gamma(k + 1) \sum_{j = 0}^k C_{k, j} A_*^{(j)} f(x) .
}
The right-hand side no longer depends on $s > 0$, and so
\formula{
 B_*^{(n)} f(x) & \le \Gamma(k + 1) \sum_{j = 0}^k C_{k, j} A_*^{(j)} f(x) .
}
Using Stein's maximal inequality~\eqref{eq:maximal:stein}, we conclude that
\formula{
 \lV B_*^{(n)} f \rV_p & \le \Gamma(k + 1) \sum_{j = 0}^k C_{k, j} \lV A_*^{(j)} f \rV_p \le C_{k, p} \lV f \rV_p ,
}
as long as condition~\eqref{eq:range:bourgain:demeter} holds. This is true when $d$ is large enough, and so the desired maximal inequality~\eqref{eq:maximal} follows for $d \ge C_{k, p}$. (Note that $k = \tfrac{n}{2} - 1$, and so $C_{k, p} = C_{n, p}$.)

The same argument, with Stein's maximal inequality~\eqref{eq:maximal:stein} replaced by the Fefferman--Stein bound~\eqref{eq:vector:stein} from Theorem~\ref{thm:vector:stein}, proves~\eqref{eq:vector} for $d \ge C_{k, p, q}$: by the convexity of the norm and~\eqref{eq:vector:stein},
\formula{
 \biggl\lV \biggl( \sum_{l = 1}^L \lv B_*^{(n)} f_l \rv^q \biggr)^{1 / q} \biggr\rV_p & \le \Gamma(k + 1) \biggl\lV \biggl( \sum_{l = 1}^L \biggl( \sum_{j = 0}^k C_{k, j} A_*^{(j)} f_l \biggr)^q \biggr)^{1 / q} \biggr\rV_p \\
 & \le \Gamma(k + 1) \sum_{j = 0}^k C_{k, j} \biggl\lV \biggl( \sum_{l = 1}^L (A_*^{(j)} f_l)^q \biggr)^{1 / q} \biggr\rV_p \\
 & \le \Gamma(k + 1) C_{k, p, q} \biggl\lV \biggl( \sum_{l = 1}^L \lv f_l \rv^q \biggr)^{1 / q} \biggr\rV_p ,
}
as long as~\eqref{eq:range:vector} holds.


\subsection{Even order in low dimensions}
\label{sec:even:low}

In order to complete the proof of Theorem~\ref{thm:maximal} when $n$ is even, we need to prove the maximal inequalities~\eqref{eq:maximal} and~\eqref{eq:vector} in low dimensions, $1 \le d \le C_{n, p}$. For this purpose, it suffices to prove them with a constant $C_{d, n, p}$ that may also depend on the dimension.

This result is a consequence of a simple majorization of $B_{\smash{s}}^{(n)} * f(x)$ by the Hardy--Littlewood maximal function of $f$, which, up to multiplication by a constant, is equal to $M_{\smash{*}}^{(1)} f$. Indeed: the kernel $B^{(d, n)}$ is bounded on $(0, 1)$ and zero on $(1, \infty)$, and so
\formula{
 \lv B_s^{(n)} * f(x) \rv & = s^{-d} \int_{\R^d} f(x - y) B^{(d, n)}(s^{-1} \lv y \rv) dy \\
 & \le \biggl(\sup_{r \in (0, 1)} \lv B^{(d, n)}(r) \rv \biggr) \, \frac{1}{s^d} \int_{\ball(0, s)} \lv f(x - y) \rv dy \\
 & = \biggl(\sup_{r \in (0, 1)} \lv B^{(d, n)}(r) \rv \biggr) \, M_s^{(1)} f(x) \\
 & \le C_{d, n} M_*^{(1)} f(x)
}
for every Schwartz function $f$ on $\R^d$. Together with the Hardy--Littlewood maximal inequality (or~\eqref{eq:maximal:generalised}), this proves~\eqref{eq:maximal} with a dimension-dependent constant $C_{d, n, p}$. Similarly, the above estimate combined with the classical Fefferman--Stein bound~\eqref{eq:vector:hardy:littlewood} yields~\eqref{eq:vector} with a constant $C_{d, n, p}$ that depends on the dimension.

We remark that the above paragraph, together with the proof of Proposition~\ref{prop:kernel} given in~\cite{kkw}, provides a very short derivation of the maximal inequality~\eqref{eq:maximal:riesz} for even $n$ with a dimension-dependent constant $C$. This result was originally proved in~\cite{mov}.


\subsection{Odd order in high dimensions}
\label{sec:odd:high}

If $n \ge 1$ is odd, $n = 2 k + 1$, we follow the same approach, but we use~\eqref{eq:kernel:7} instead of~\eqref{eq:kernel:4}. Combining this formula and~\eqref{eq:averaging}, we find that for Schwartz functions $f$ on $\R^d$,
\formula{
 B_s^{(n)} * f(x) & = \frac{(\Gamma(\tfrac{d}{2} + k + \tfrac{1}{2}))^2}{\Gamma(\tfrac{d}{2}) \Gamma(\tfrac{d}{2} + k + 1) (\Gamma(k + \tfrac{1}{2}))^2} \times {} \\
 & \hspace*{-2em} \times \int_0^\infty \frac{1}{t} \biggl(\frac{d}{d t}\biggr)^k \biggl[t^{d / 2 + k} (1 - t)^k \re \F{\tfrac{1}{2}}{\tfrac{d}{2} + 2 k + \tfrac{1}{2}}{\tfrac{d}{2} + k + 1}{t + 0 i}\biggr] A_{s \sqrt{t}} f(x) dt .
}
As before, we integrate by parts $k$ times, and we get
\formula{
 B_s^{(n)} * f(x) & = \frac{(-1)^k (\Gamma(\tfrac{d}{2} + k + \tfrac{1}{2}))^2}{\Gamma(\tfrac{d}{2}) \Gamma(\tfrac{d}{2} + k + 1) (\Gamma(k + \tfrac{1}{2}))^2} \times {} \\
 & \hspace*{-2em} \times \int_0^\infty t^{d / 2 + k} (1 - t)^k \re \F{\tfrac{1}{2}}{\tfrac{d}{2} + 2 k + \tfrac{1}{2}}{\tfrac{d}{2} + k + 1}{t + 0 i} \biggl(\frac{d}{d t}\biggr)^k \biggl[\frac{1}{t} A_{s \sqrt{t}} f(x)\biggr] dt ,
}
but this time it is more difficult to control the boundary terms. In the $j$th integration by parts:
\begin{itemize}
\item the boundary term at $0$ is of order at most $t^{d/2 + j} t^{-j} = t^{d / 2}$, because $A_{\smash{r}}^{(j)} f(x)$ has a finite limit as $r \to 0^+$ for every $j$;
\item the boundary term at $\infty$ also vanishes, because $A_{\smash{r}}^{(j)} f$ is rapidly decaying as $r \to \infty$ for every $j$, while by~\eqref{eq:hypergeometric:reciprocity:real} the hypergeometric function
\formula{
 \Phi(t) & = \re \F{\tfrac{1}{2}}{\tfrac{d}{2} + 2 k + \tfrac{1}{2}}{\tfrac{d}{2} + k + 1}{t + 0 i} ,
}
as well as all its derivatives, has power-type growth as $t \to \infty$ (see~\href{https://dlmf.nist.gov/15.12.i}{\S 15.12(i)} in~\cite{nist});
\item the function $t^{d / 2 + k} (1 - t)^k \Phi(t)$ has $k - 1$ continuous derivatives at $t = 1$ (see \nist{15.4}{23} and \nist{15.5}{1} in~\cite{nist});
\item the $k$th derivative of this function involves the term $\Phi(t)$, which has a logarithmic singularity near $t = 1$ (see \nist{15.4}{21} in~\cite{nist}).
\end{itemize}
As in the case of even order $n$, we expand the derivatives:
\formula{
 B_s^{(n)} * f(x) & = \frac{(\Gamma(\tfrac{d}{2} + k + \tfrac{1}{2}))^2}{\Gamma(\tfrac{d}{2}) \Gamma(\tfrac{d}{2} + k + 1)} \times {} \\
 & \hspace*{-2em} \times \int_0^\infty t^{d / 2 - 1} (1 - t)^k \re \F{\tfrac{1}{2}}{\tfrac{d}{2} + 2 k + \tfrac{1}{2}}{\tfrac{d}{2} + k + 1}{t + 0 i} \biggl(\sum_{j = 0}^k C_{k, j} A_{s \sqrt{t}}^{(j)} f(x)\biggr) dt ,
}
and estimate $A_{\smash{r}}^{(j)} f(x)$ by the corresponding maximal function $A_{\smash{*}}^{(j)} f(x)$:
\formula*[eq:averaging:derivatives]{
 \lv B_s^{(n)} * f(x) \rv & \le \frac{(\Gamma(\tfrac{d}{2} + k + \tfrac{1}{2}))^2}{\Gamma(\tfrac{d}{2}) \Gamma(\tfrac{d}{2} + k + 1)} \times {} \\
 & \hspace*{-2em} \times \biggl(\int_0^\infty t^{d / 2 - 1} (1 - t)^k \re \F{\tfrac{1}{2}}{\tfrac{d}{2} + 2 k + \tfrac{1}{2}}{\tfrac{d}{2} + k + 1}{t + 0 i} dt\biggr) \biggl(\sum_{j = 0}^k C_{k, j} A_*^{(j)} f(x)\biggr) ;
}
this is where we use the positivity property~\eqref{eq:kernel:positive}.

It remains to evaluate the integral on the right-hand side of the above estimate. For this purpose, we repeat the above calculation for a constant function $f(x) = 1$. Since $B_{\smash{s}}^{(n)} * f(x) = 1$ and $A_{\smash{r}} f(x) = 1$, by~\eqref{eq:averaging} we obtain
\formula{
 1 & = \frac{(\Gamma(\tfrac{d}{2} + k + \tfrac{1}{2}))^2}{\Gamma(\tfrac{d}{2}) \Gamma(\tfrac{d}{2} + k + 1) (\Gamma(k + \tfrac{1}{2}))^2} \times {} \\
 & \qquad \times \int_0^\infty \frac{1}{t} \biggl(\frac{d}{d t}\biggr)^k \biggl[t^{d / 2 + k} (1 - t)^k \re \F{\tfrac{1}{2}}{\tfrac{d}{2} + 2 k + \tfrac{1}{2}}{\tfrac{d}{2} + k + 1}{t + 0 i}\biggr]  dt .
}
We integrate by parts $k$ times:
\formula{
 1 & = \frac{k! (\Gamma(\tfrac{d}{2} + k + \tfrac{1}{2}))^2}{\Gamma(\tfrac{d}{2}) \Gamma(\tfrac{d}{2} + k + 1) (\Gamma(k + \tfrac{1}{2}))^2} \int_0^\infty t^{d / 2 - 1} (1 - t)^k \re \F{\tfrac{1}{2}}{\tfrac{d}{2} + 2 k + \tfrac{1}{2}}{\tfrac{d}{2} + k + 1}{t + 0 i} dt .
}
Again all boundary terms vanish, but the argument for $t \to \infty$ is different: we no longer have a rapidly decaying term. However, by~\eqref{eq:hypergeometric:reciprocity:real}, in the $j$th integration by parts the boundary term at $\infty$ is of order at most $t^{-j} t^{-k + j - 1/2} = t^{-k - 1/2}$.

The above equality provides an expression for the integral that appears in~\eqref{eq:averaging:derivatives}. Combining the two formulae, we get the desired estimate
\formula{
 \lv B_s^{(n)} * f(x) \rv & \le \sum_{j = 0}^k C_{k, j} A_*^{(j)} f(x) ,
}
which, just as in the case of even order $n$, implies~\eqref{eq:maximal} for $d \ge C_{k, p}$. Once again, the Fefferman--Stein bound~\eqref{eq:vector} is obtained by replacing~\eqref{eq:maximal:stein} by~\eqref{eq:vector:stein} from Theorem~\ref{thm:vector:stein}.


\subsection{Odd order in low dimensions}
\label{sec:odd:low}

As in the case of the even order $n$, we complete the proof of Theorem~\ref{thm:maximal} for odd $n$ by proving the maximal inequalities~\eqref{eq:maximal} and~\eqref{eq:vector} with constants $C_{d, n, p}$ that may depend on the dimension. If $n$ is odd, however, the function $B^{(d, n)}$ is no longer bounded near $r = 1$, and the proof is necessarily more involved.  We use the fact that the singularity of $B^{(d, n)}$ is logarithmic, and so the convolutions $B_{\smash{s}}^{(n)} * f$ are dominated by appropriate mixtures of Stein's generalised fractional means $M_{\smash{r}}^{(\alpha)} \lv f \rv$ (as defined in~\eqref{eq:generalised}), where $\alpha \in (0, 1)$ can be arbitrarily close to $1$. More precisely, we claim that if $\delta \in (0, \tfrac{1}{2})$ and $f$ is a Schwartz function on $\R^d$, then
\formula[eq:odd:claim]{
 \lv B_s^{(n)} * f(x) \rv & \le C_{d, n, \delta} \int_1^\infty M_{s \sqrt{u}}^{(1 - 2 \delta)} \lv f \rv (x) (u - 1)^{\delta - 1} u^{-n/2 - \delta} du .
}
Once this claim is proved, we have
\formula{
 \lv B_s^{(n)} * f(x) \rv & \le C_{d, n, \delta} \biggl(\int_1^\infty (u - 1)^{\delta - 1} u^{-n/2 - \delta} du\biggr) M_*^{(1 - 2 \delta)} \lv f \rv (x) = C_{d, n, \delta} M_*^{(1 - 2 \delta)} \lv f \rv (x) .
}
The maximal inequality~\eqref{eq:maximal}, with a constant $C_{d, n, p}$ depending on the dimension, follows then from Stein's maximal inequality~\eqref{eq:maximal:generalised}, provided that we choose $\delta \in (0, \tfrac{1}{2})$ small enough (depending on $d$, $n$ and $p$), so condition~\eqref{eq:range:generalised} is satisfied with $\alpha = 1 - 2 \delta$. The same argument, but with Stein's maximal inequality~\eqref{eq:maximal:generalised} replaced by the corresponding Fefferman--Stein estimate~\eqref{eq:vector:generalised} from Theorem~\ref{thm:vector:generalised}, leads to~\eqref{eq:vector}.

It remains to prove our claim. This requires a simple, but somewhat lengthy estimate of the kernel. The behaviour of $B^{(d, n)}(r)$ near $r = 1$ is described by \nist{15.4}{21} in~\cite{nist}:
\formula[eq:hypergeometric:one:log]{
 \lim_{t \to 1^-} \frac{1}{-\log(1 - t)} \F{a}{b}{a + b}{t} & = \frac{\Gamma(a + b)}{\Gamma(a) \Gamma(b)} \, .
}
By this property and~\eqref{eq:kernel:1}, after simplification,
\formula{
 \lim_{t \to 1^-} \frac{B^{(d, n)}(\sqrt{t})}{-\log (1 - t)} & = \frac{(-1)^{(n - 1) / 2} \Gamma(\tfrac{d + n}{2})}{\pi^{d / 2 + 1} \Gamma(\tfrac{n}{2})} ,
}
and since the hypergeometric function is continuous on $[0, 1)$, it follows that for $t \in (0, 1)$,
\formula[eq:odd:1]{
 \lv B^{(d, n)}(\sqrt{t}) \rv & \le C_{d, n} (1 - \log (1 - t)) .
}
Similarly, by~\eqref{eq:kernel:2} and~\eqref{eq:hypergeometric:one:log},
\formula{
 \lim_{t \to 1^+} \frac{B^{(d, n)}(\sqrt{t})}{-\log (1 - t^{-1})} & = \frac{(-1)^{(n - 1) / 2} \Gamma(\tfrac{d + n}{2})}{\pi^{d / 2 + 1} \Gamma(\tfrac{n}{2})} ,
}
and therefore, for $t \in (1, \infty)$,
\formula[eq:odd:2]{
 \lv B^{(d, n)}(\sqrt{t}) \rv & \le C_{d, n} \, \frac{1}{t^{(d + n) / 2}} \, (1 - \log (1 - t^{-1})) .
}
We turn to the estimates of the kernel on the right-hand side of~\eqref{eq:odd:claim}. Let $\delta \in (0, \tfrac{1}{2})$. We need two more properties of the hypergeometric function. By \nist{15.6}{1} in~\cite{nist}, if $0 < b < c$, then, for $t \in (0, 1)$,
\formula[eq:hypergeometric:integral]{
 \F{a}{b}{c}{t} & = \frac{\Gamma(c)}{\Gamma(b) \Gamma(c - b)} \int_0^1 \frac{v^{b - 1} (1 - v)^{c - b - 1}}{(1 - t v)^a} \, dv .
}
By \nist{15.4}{23} in~\cite{nist},
\formula[eq:hypergeometric:one:power]{
 \lim_{t \to 1^-} \frac{1}{(1 - t)^{-\delta}} \F{a}{b}{a + b - \delta}{t} & = \frac{\Gamma(a + b - \delta) \Gamma(\delta)}{\Gamma(a) \Gamma(b)} .
}
Using the substitution $u = 1 / v$ and~\eqref{eq:hypergeometric:integral}, we have
\formula{
 \int_1^\infty (u - t)^{-2 \delta} (u - 1)^{\delta - 1} u^{-(d + n) / 2 + \delta} du & = \int_0^1 (1 - t v)^{-2 \delta} (1 - v)^{\delta - 1} v^{(d + n) / 2 - 1} dv \\
 & = \frac{\Gamma(\tfrac{d + n}{2}) \Gamma(\delta)}{\Gamma(\tfrac{d + n}{2} + \delta)} \, \F{2 \delta}{\tfrac{d + n}{2}}{\tfrac{d + n}{2} + \delta}{t}
}
for $t \in (0, 1)$. By~\eqref{eq:hypergeometric:one:power}, the right-hand side is of order $(1 - t)^{-\delta}$ as $t \to 1^-$, and so
\formula[eq:odd:3]{
 \int_1^\infty (u - t)^{-2 \delta} (u - 1)^{\delta - 1} u^{-(d + n) / 2 + \delta} du & \ge C_{d, n, \delta} \, \frac{1}{(1 - t)^\delta} .
}
Similarly, if $t \in (1, \infty)$, then, by the substitution $u = t / v$ and~\eqref{eq:hypergeometric:integral},
\formula{
 \int_t^\infty (u - t)^{-2 \delta} (u - 1)^{\delta - 1} u^{-(d + n) / 2 + \delta} du & = \frac{1}{t^{(d + n) / 2}} \int_0^1 (1 - v t^{-1})^{\delta - 1} (1 - v)^{-2 \delta} v^{(d + n) / 2 - 1} dv \\
 & = \frac{\Gamma(\tfrac{d + n}{2}) \Gamma(1 - 2 \delta)}{\Gamma(\tfrac{d + n}{2} + 1 - 2 \delta)} \, \frac{1}{t^{(d + n) / 2}} \, \F{1 - \delta}{\tfrac{d + n}{2}}{\tfrac{d + n}{2} + 1 - 2 \delta}{\frac{1}{t}} .
}
By~\eqref{eq:hypergeometric:one:power}, the right-hand side is of order $(1 - t^{-1})^{-\delta}$ as $t \to 1^+$, and so
\formula[eq:odd:4]{
 \int_t^\infty (u - t)^{-2 \delta} (u - 1)^{\delta - 1} u^{-(d + n) / 2 + \delta} du & \ge C_{d, n, \delta} \, \frac{1}{t^{(d + n) / 2}} \, \frac{1}{(1 - t^{-1})^\delta} .
}
Observe that $0 < 1 - \log(1 - t) \le C_\delta (1 - t)^{-\delta}$ for $t \in (0, 1)$. Hence, the above four estimates~\eqref{eq:odd:1}, \eqref{eq:odd:2}, \eqref{eq:odd:3} and \eqref{eq:odd:4} imply that for all $t > 0$, $t \ne 1$, we have
\formula{
 \lv B^{(d, n)}(\sqrt{t}) \rv & \le C_{d, n, \delta} \, \int_{\max\{1, t\}}^\infty (u - t)^{-2 \delta} (u - 1)^{\delta - 1} u^{-(d + n) / 2 + \delta} du .
}
Now~\eqref{eq:averaging}, the above inequality, Fubini's theorem, and the substitution $t = u w^2$ lead to
\formula{
 \lv B_s^{(n)} * f(x) \rv & = \frac{\pi^{d / 2}}{\Gamma(\tfrac{d}{2})} \biggl| \int_0^\infty t^{d / 2 - 1} B^{(d, n)}(\sqrt{t}) A_{s \sqrt{t}} f(x) dt \biggr| \\
 & \le \frac{\pi^{d / 2}}{\Gamma(\tfrac{d}{2})} \int_0^\infty t^{d / 2 - 1} \lv B^{(d, n)}(\sqrt{t}) \rv A_{s \sqrt{t}} \lv f \rv (x) dt \displaybreak[0] \\
 & \le C_{d, n, \delta} \int_0^\infty t^{d / 2 - 1} \biggl( \int_{\max\{1, t\}}^\infty (u - t)^{-2 \delta} (u - 1)^{\delta - 1} u^{-(d + n) / 2 + \delta} du \biggr) A_{s \sqrt{t}} \lv f \rv (x) dt \displaybreak[0] \\
 & \le C_{d, n, \delta} \int_1^\infty \biggl( \int_0^u t^{d / 2 - 1} (u - t)^{-2 \delta} A_{s \sqrt{t}} \lv f \rv (x) dt \biggr) (u - 1)^{\delta - 1} u^{-(d + n) / 2 + \delta} du \\
 & = C_{d, n, \delta} \int_1^\infty \biggl( \int_0^1 w^{d - 1} (1 - w^2)^{-2 \delta} A_{s w \sqrt{u}} \lv f \rv (x) dw \biggr) (u - 1)^{\delta - 1} u^{-n / 2 - \delta} du .
}
However, using the definition~\eqref{eq:generalised} of Stein's generalised spherical means and integration in spherical coordinates, we find that, just as in~\eqref{eq:averaging},
\formula{
 M_r^{(1 - 2 \delta)} \lv f \rv (x) & = \frac{1}{\Gamma(1 - 2 \delta)} \int_{\ball(0, 1)} \lv f(x + r y) \rv (1 - \lv y \rv^2)^{-2 \delta} dy \\
 & = \frac{2 \pi^{d / 2}}{\Gamma(1 - 2 \delta) \Gamma(\tfrac{d}{2})} \int_0^1 w^{d - 1} (1 - w^2)^{-2 \delta} A_{r w} \lv f \rv (x) dw ,
}
and our claim~\eqref{eq:odd:claim} follows.

We note that this section, together with the proof of Proposition~\ref{prop:kernel} given in~\cite{kkw}, provides a direct and relatively short proof of the maximal inequality~\eqref{eq:maximal:riesz} for odd $n$ with a dimension-dependent constant $C$, a result originally proved in~\cite{mopv}.

%
%

\section{Fefferman--Stein inequality for Stein's generalised spherical means}
\label{sec:vector}

In their seminal work~\cite{fs}, Fefferman and Stein proved the following vector-valued inequality for the Hardy--Littlewood maximal operator $M_{\smash{*}}^{(1)}$: if $p, q \in (1, \infty)$, then for all Schwartz functions $f_1, f_2, \ldots, f_L$,
\formula[eq:vector:hardy:littlewood]{
 \biggl\lV \biggl( \sum_{l = 1}^L (M_*^{(1)} f_l)^q \biggr)^{1 / q} \biggr\rV_p & \le C_{d, p, q} \biggl\lV \biggl( \sum_{l = 1}^L \lv f_l \rv^q \biggr)^{1 / q} \biggr\rV_p .
}
By continuity, the above estimate holds for infinite sequences $f_1, f_2, \ldots$ of functions in $L^p(\R^d)$.

A fully analogous inequality for Stein's spherical maximal operator $A_{\smash{*}}^{(0)}$, with a constant independent of the dimension, was given by Deleaval and Kriegler in~\cite{dk}, under the assumptions $d \ge 3$ and $p, q \in (\tfrac{d}{d - 1}, d)$. Their proof combines the Fefferman--Stein inequality~\eqref{eq:vector:hardy:littlewood} with the ideas of Bourgain's work~\cite{bourgain} on Stein's spherical maximal function, and then applies the method of rotations from Stein's work~\cite{stein:essay} (see also~\cite{ss}) to show that the constant improves with the dimension. After minor modifications, the method of~\cite{dk} applies to Stein's generalised spherical means $M_{\smash{r}}^{(\alpha)}$ and the derivatives of spherical means $A_{\smash{r}}^{(k)}$, leading to the following two results. For the reader's convenience, we provide complete proofs.

\begin{theorem}[Fefferman--Stein inequality for generalised spherical means]
\label{thm:vector:generalised}
Suppose that
\formula{
 & \alpha \in \C , \qquad d \ge 1 , \qquad \re \alpha > 1 - \tfrac{d}{2} , \qquad p, q \in (1, \infty) ,
}
and, if\/ $\re \alpha < 1$, additionally
\formula{
 p, q \in \biggl(\frac{d}{d - 1 + \re \alpha} , \frac{d}{1 - \re \alpha} \biggr) .
}
If\/ $f_1, f_2, \ldots, f_L$ are Schwartz functions, then
\formula[eq:vector:generalised]{
 \biggl\lV \biggl( \sum_{l = 1}^L (M_*^{(\alpha)} f_l)^q \biggr)^{1 / q} \biggr\rV_p & \le C_{d, \alpha, p, q} \biggl\lV \biggl( \sum_{l = 1}^L \lv f_l \rv^q \biggr)^{1 / q} \biggr\rV_p .
}
\end{theorem}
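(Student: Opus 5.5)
We follow the Bourgain / Deleaval--Kriegler strategy: a Littlewood--Paley decomposition in frequency, vector-valued square-function estimates at each frequency scale, and complex interpolation in $\alpha$. Since the constant may depend on $d$, no rotation argument is needed.

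\textbf{Case $\re\alpha \ge 1$.} Here $M_r^{(\alpha)}$ has kernel $r^{-d}m_\alpha(\lv y\rv/r)$ with $m_\alpha(y) = \Gamma(\alpha)^{-1}(1-\lv y\rv^2)^{\alpha-1}\ind_{\ball(0,1)}(y)$. This kernel is bounded, radial and compactly supported, so
\formula{
 \lv M_r^{(\alpha)} f \rv \le C_{d,\alpha}\, M_*^{(1)}\lv f\rv ,
}
and \eqref{eq:vector:hardy:littlewood} gives the claim. This also covers all $p,q\in(1,\infty)$.

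\textbf{Case $1-\tfrac{d}{2}<\re\alpha<1$.} Take a smooth Littlewood--Paley partition $1=\sum_{j\ge0}\psi_j(\xi)$, with $\psi_0$ supported in $\lv\xi\rv\le2$ and $\psi_j$ in $\lv\xi\rv\sim 2^j$. Write
\formula{
 M_r^{(\alpha)} = \sum_j M_{r,j}^{(\alpha)}, \qquad \widehat{M_{r,j}^{(\alpha)} f}(\xi) = m_\alpha(r\xi)\psi_j(r\xi)\hat f(\xi) .
}
The symbol $m_\alpha$ is, up to constants, $\lv\xi\rv^{-d/2-\alpha+1}J_{d/2+\alpha-1}(\lv\xi\rv)$, so it decays like $\lv\xi\rv^{-(d-1)/2-\re\alpha}$, and so does each of its derivatives. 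Hence for $j\ge1$:
\begin{enumerate}[label=(\roman*)]
\item \emph{$L^2(\ell^2)$-type bound.} The usual Sobolev-in-$r$ argument,
\formula{
 \sup_r\lv F(r)\rv^2 \le \int \lv F\rv\,\lv rF'(r)\rv\,\frac{dr}{r} ,
}
together with Plancherel, gives $\lV \sup_r \lv M_{r,j}^{(\alpha)}f\rv\rV_2 \le C 2^{j(1-d/2-\re\alpha)}\lV f\rV_2$. This is summable in $j$ since $\re\alpha>1-\tfrac d2$. Because it is a linear estimate, it extends to $L^2(\ell^2)$ automatically (Marcinkiewicz--Zygmund), and more generally to $L^2(\ell^q)$.
\item \emph{Endpoint-type bound.} The kernel of $M_{r,j}^{(\alpha)}$ is essentially a smoothed $2^{-j}$-neighbourhood of the sphere with weight $2^{j(1-\re\alpha)}$. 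This gives the pointwise domination $\sup_r\lv M_{r,j}^{(\alpha)}f\rv \le C 2^{j(1-\re\alpha)} M_*^{(1)}f$, up to Schwartz tails. With \eqref{eq:vector:hardy:littlewood} this yields a bound on $L^{p}(\ell^q)$ for all $p,q\in(1,\infty)$ with growth $2^{j(1-\re\alpha)}$.
\end{enumerate}
The piece $j=0$ is again dominated by $M_*^{(1)}$.

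\textbf{Interpolation.} We interpolate (i) and (ii) on the mixed-norm spaces $L^p(\ell^q)$. Linearise the supremum: fix measurable $r_l(x)$, and view $f\mapsto (M_{r_l(\cdot),j}^{(\alpha)}f_l)_l$ as a linear operator. For intermediate exponents this gives geometric decay in $j$ exactly when $1/p$ and $1/q$ lie in
\formula{
 \bigl((1-\re\alpha)/d,\;(d-1+\re\alpha)/d\bigr) .
}
This is the stated range. Summing over $j$ finishes the proof. To land exactly at the edges of the range, it may be cleaner to use Stein's analytic family in $\alpha$ rather than interpolating at each fixed $j$.

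\textbf{Main obstacle.} The hardest point is that (i) holds only at $p=q=2$, but we need decay for all $p,q$ near the critical line. Interpolating the $L^2(\ell^2)$ bound against the $L^{p_0}(\ell^{q_0})$ bounds of (ii), with $p_0$ and $q_0$ independently near $1$ or near $\infty$, covers only the open rectangle. Carrying out the exponent bookkeeping, $\theta$ versus $2^{j}$ powers, should reproduce exactly the constraints $p,q\in\bigl(\tfrac{d}{d-1+\re\alpha},\tfrac{d}{1-\re\alpha}\bigr)$. I would check this computation first.
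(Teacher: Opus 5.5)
Your proposal is correct and follows the paper's proof essentially step by step. The shared steps are the Littlewood--Paley splitting of $M_r^{(\alpha)}$; the $L^2(\ell^2)$ bound with decay $2^{-(d/2+\re\alpha-1)j}$ from the $r\,\tfrac{d}{dr}$ square-function argument (Lemma~\ref{lem:high:sharp}); the crude $L^p(\ell^q)$ bound with growth $2^{(1-\re\alpha)j}$ via domination by $M_*^{(1)}$ (Lemma~\ref{lem:high:crude}); and interpolation on $L^p(\ell^q)$. In Lemma~\ref{lem:high:crude} the paper actually proves the kernel bound $\lv\psi(s)\rv\le C_{d,\alpha}2^{(1-\re\alpha)j}(1+s)^{-d-1}$ that you only assert, using Bessel estimates near the sphere and Lemma~\ref{lem:auxiliary} away from it. The interpolation bookkeeping does give exactly the open range stated, so no analytic interpolation in $\alpha$ is needed.

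One slip, which you never use: the $L^2(\ell^2)$ version of (i) follows simply from Fubini, and Marcinkiewicz--Zygmund is beside the point since the maximal operator is sublinear. An extension to $L^2(\ell^q)$ with $q\ne2$ does not follow from an $L^2$ bound in general.
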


\begin{theorem}[Fefferman--Stein inequality for derivatives of spherical means]
\label{thm:vector:stein}
Suppose that
\formula[eq:range:vector]{
 & k = 0, 1, 2, \ldots\, \qquad d \ge 2 k + 3 , \qquad p, q \in \biggl(\frac{d}{d - k - 1} , \frac{d}{k + 1} \biggr) .
}
If\/ $f_1, f_2, \ldots, f_L$ are Schwartz functions, then
\formula[eq:vector:stein]{
 \biggl\lV \biggl( \sum_{l = 1}^L (A_*^{(k)} f_l)^q \biggr)^{1 / q} \biggr\rV_p & \le C_{k, p, q} \biggl\lV \biggl( \sum_{l = 1}^L \lv f_l \rv^q \biggr)^{1 / q} \biggr\rV_p .
}
\end{theorem}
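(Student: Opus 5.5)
The statement to prove is Theorem~\ref{thm:vector:stein}: a Fefferman--Stein inequality for $A_*^{(k)}$ with a constant independent of $d$. The plan has three steps. First we get a vector-valued bound in a fixed dimension from Theorem~\ref{thm:vector:generalised}. Then we remove the dependence on $d$ by the method of rotations, as in Deleaval and Kriegler~\cite{dk}. The growing range of $d$ is what makes the second step possible.

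\emph{Step 1: fixed dimension.} Following the companion paper~\cite{k}, we express $A_r^{(k)}$ as a finite linear combination of the generalised means $M_r^{(\alpha)}$ with $\alpha \in \{1 - \tfrac{d}{2}+\tfrac12, \ldots\}$ near $-k$. The basic identity is $A_r f = c_d M_r^{(1 - d/2)} f$ after suitable normalisation. Each $r\,\tfrac{d}{dr}$ applied to $M_r^{(\alpha)}$ produces a combination of $M_r^{(\alpha)}$ and $M_r^{(\alpha-1)}$; this follows by differentiating the holomorphic family under the integral, or on the Fourier side via Bessel functions. Hence
\[
A_*^{(k)} f \le C_{d,k} \sum_{j=0}^{k} M_*^{(\alpha_j)} f
\]
for admissible complex or real $\alpha_j$ with $\re \alpha_j \ge 1 - \tfrac d2 - k$.

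The hypothesis $\re\alpha > 1 - \tfrac d2$ in Theorem~\ref{thm:vector:generalised} is then not directly satisfied. So the first thing we try is to use the relation from~\cite{k} in the opposite direction. In a dimension $d$ in the range~\eqref{eq:range:vector}, $A_r^{(k)}$ is a radial average of lower-order generalised means $M_r^{(-k)}$ taken in a suitable dimension. The intervals $(\tfrac{d}{d-k-1}, \tfrac{d}{k+1})$ are exactly those of Theorem~\ref{thm:vector:generalised} with $\re\alpha = -k$. Combining that relation with Theorem~\ref{thm:vector:generalised} gives~\eqref{eq:vector:stein} with a constant $C_{d,k,p,q}$.

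\emph{Step 2: dimension-free constant by rotations.} Fix $d_0 \ge 2k+3$ with $p,q$ in the $d_0$-range; this interval only grows with $d$. For $d > d_0$, write the normalised surface measure on $\sph^{d-1}$ as an average over rotations $\rho \in SO(d)$ of lifts of the $d_0$-dimensional spherical measure, weighted by the factor $(1-|y'|^2)^{(d-d_0)/2-1}$. This weight is where generalised means re-enter. Differentiating in $r$ then expresses $A_r^{(k)} f(x)$ as an average over $\rho$ of mixtures of the $d_0$-dimensional operators $A^{(j)}_{r\lambda}$, $j\le k$, with $\lambda\in(0,1)$ integrated against probability-type densities. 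These operators act on the restrictions of $f\circ\rho$ to affine $d_0$-planes.

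Taking the supremum over $r$ inside the average gives
\[
A_*^{(k)}f(x) \le C_k \int_{SO(d)} \sum_{j \le k} \mathcal{A}^{(j)}_{*,\rho} f(x)\, d\rho ,
\]
where $\mathcal{A}^{(j)}_{*,\rho}$ is the $d_0$-dimensional maximal operator acting on planes parallel to $\rho(\R^{d_0})$. We then apply Minkowski's inequality in the mixed norm $L^p(\ell^q)$ to move the $\rho$-integral outside. Next, Fubini slices $\R^d$ into $d_0$-planes, and Step~1 is applied on each plane with constant $C_{d_0,k,p,q} = C_{k,p,q}$.

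\emph{Main obstacle.} The difficulty is the rotation identity with derivatives. We must check that the mixture coefficients stay bounded uniformly in $d$, so the weights are probability densities up to constants depending only on $k$. Derivatives in $r$ hitting the $\lambda$-rescaling produce factors of order $d$. These must be absorbed by integrating by parts in $\lambda$ against the beta-type density, exactly as in Section~\ref{sec:even:high}. We will first try to verify uniform bounds by computing the beta integrals explicitly via~\nist{5.12}{1}.
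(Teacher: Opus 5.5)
Your two-step architecture matches the paper's. The paper gets a fixed-dimension bound from Theorem~\ref{thm:vector:generalised} and quotes the transfer to higher dimensions from Section~2.7 of~\cite{k}. However, the identity you build Step~2 on is false, so that step cannot be carried out as written. The identity you start Step~1 from is also wrong, although you reach the right conclusion there.

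\textbf{Step~1.} Spherical means are the $\alpha = 0$ member of Stein's family, not $\alpha = 1 - \tfrac{d}{2}$. Comparing Fourier symbols gives $A_r f = \frac{\Gamma(d/2)}{\pi^{d/2}} M_r^{(0)} f$. Your recursion is correct; on the symbol side it reads $r \frac{d}{dr} M_r^{(\alpha)} = 2 M_r^{(\alpha - 1)} - (d + 2 \alpha - 2) M_r^{(\alpha)}$. Since $r^k (d/dr)^k = \prod_{i = 0}^{k - 1} (r \frac{d}{dr} - i)$, you get $A_r^{(k)}$ as a finite linear combination of $M_r^{(-j)}$, $0 \le j \le k$, in the \emph{same} dimension, with leading coefficient $2^k \Gamma(\tfrac{d}{2}) / \pi^{d/2}$. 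This is Equation~(9) of~\cite{k}, which is exactly what the paper uses. Theorem~\ref{thm:vector:generalised} then applies directly with $\alpha = -j$. The condition $-k > 1 - \tfrac{d}{2}$ is equivalent to $d \ge 2k + 3$, and the $(p, q)$-interval for $\alpha = -j$ contains the one for $\alpha = -k$. So the obstacle you describe in Step~1 is an artifact of the wrong base identity. The ``radial average of $M^{(-k)}$ in a suitable dimension'' you fall back on is not the relation that does the job.

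\textbf{Step~2.} Representing $\sigma$ as a rotation average of weighted lifts (equivalently, of mixtures of $d_0$-dimensional means at radii $r\lambda$, $\lambda \in (0, 1)$) is false. The weight $(1 - \lv y' \rv^2)^{(d - d_0)/2 - 1}$ is the density of the projection of $\sigma$ onto $\R^{d_0}$. It describes $A_r$ acting on functions of $d_0$ variables, not on general $f$. To see the failure, note that averaging the $d_0$-dimensional spherical mean at radius $r\lambda$ over rotations gives the $d$-dimensional one at the same radius. Hence your right-hand side equals $\int_0^1 w(\lambda) A_{r\lambda} f(x) \, d\lambda$ for some density $w$, which is a ball-type mean rather than $A_r f(x)$.

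The correct method of rotations uses no weights. By uniqueness of the rotation-invariant probability measure on the sphere,
\[
 \sigma_{d - 1} = \int_{SO(d)} \rho_* \sigma_{d_0 - 1} \, d\rho ,
\]
where $\sigma_{d-1} = \sigma$ and $\sigma_{d_0 - 1}$ is the normalised measure on the great subsphere $\sph \cap \R^{d_0}$. Hence $A_r^{(k)} f(x) = \int_{SO(d)} A_r^{(k), \rho} f(x) \, d\rho$ exactly, where $A_r^{(k), \rho}$ is the $d_0$-dimensional operator acting along planes parallel to $\rho(\R^{d_0})$. The $r$-derivatives commute with the $\rho$-integral, so no factors of $d$ and no integration by parts in $\lambda$ ever arise: your ``main obstacle'' does not exist.

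With this identity the rest of your Step~2 goes through as you describe. You take the supremum inside the integral, apply Minkowski in $L^p(\ell^q)$, use Fubini over the $d_0$-planes, and choose $d_0$ depending only on $k, p, q$.
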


In the remaining part of this section, we prove the above theorems. We follow the argument of~\cite{dk}, with some modifications. For example, we simplify the proof of Lemma~\ref{lem:high:crude} by estimating the Fourier transform instead of applying the Funk--Hecke formula.

For $p, q \in (1, \infty)$ and Schwartz functions $f_1, f_2, \ldots, f_L$, we use a short-hand notation
\formula{
 \lV (f_1, f_2, \ldots, f_L) \rV_{p, q} & = \biggl( \int_{\R^d} \biggl( \sum_{l = 1}^L \lv f_l(x) \rv^q \biggr)^{p / q} dx \biggr)^{1 / p}
}
for the $L^p(\ell^q)$ norm that appears in Fefferman--Stein estimates~\eqref{eq:vector:hardy:littlewood}, \eqref{eq:vector:generalised} and~\eqref{eq:vector:stein}.


\subsection{General estimates}

Let $\psi$ be a real-valued function on $[0, \infty)$ such that $\psi(\lv s \rv)$ is an even Schwartz function on $\R$. For $r > 0$ and $x \in \R^d$, let
\formula{
 \psi_r(x) & = r^{-d} \psi(r^{-1} \lv x \rv) .
}
Observe that $\fourier \psi_r(\xi) = \fourier \psi_1(r \xi)$, and $\fourier \psi_1$ is a radial function with profile given by the Hankel transform of $\psi$. We denote this transform by $\fourier_d \psi$; thus,
\formula[eq:hankel]{
 \fourier \psi_r(\xi) & = \fourier_d \psi(r \lv \xi \rv) = (2 \pi)^{d / 2} \int_0^\infty s^{d - 1} \psi(s) (r s \lv \xi \rv)^{1 - d / 2} J_{d / 2 - 1}(r s \lv \xi \rv) ds .
}
If $f$ is a Schwartz function, we define the convolution operators
\formula{
 \Psi_r f(x) & = f * \psi_r(x) .
}
Clearly,
\formula{
 \fourier \Psi_r f(\xi) & = \fourier \psi_r(\xi) \fourier f(\xi) = \fourier_d \psi(r \lv \xi \rv) \fourier f(\xi) .
}
We denote by $\Psi_*$ the corresponding maximal function,
\formula{
 \Psi_* f(x) & = \sup_{r \in (0, \infty)} \lv \Psi_r f(x) \rv ,
}
and by $g_\psi$ the associated square function,
\formula{
 g_\psi(f)(x) & = \biggl( \int_0^\infty \frac{\lv \Psi_r f(x) \rv^2}{r} \, dr \biggr)^{1 / 2} .
}
This notation is used without further comments in the remaining part of the paper.

The following three results are standard. The first one is obtained by controlling $\Psi_* f$ in terms of the Hardy--Littlewood maximal function $M_{\smash{*}}^{(1)} f$. The next one is a simple estimate of the square function $g_\psi(f)$. The last one turns it into a maximal inequality. We include the proof for the reader's convenience.

\begin{lemma}[see Proposition~2.1 in~\cite{dk}]
\label{lem:vector:low}
If\/ $p, q \in (1, \infty)$ and $f_1, f_2, \ldots, f_L$ are Schwartz functions, then
\formula{
 \lV (\Psi_* f_1, \Psi_* f_2, \ldots, \Psi_* f_L) \rV_{p, q} & \le C_{d, p, q} \lV \psi_1^* \rV_1^{\phantom{*}} \lV (f_1, f_2, \ldots, f_L) \rV_{p, q} ,
}
where $\psi_{\smash{1}}^*(x) = \sup \{ \lv \psi_1(y) \rv : \lv y \rv \ge \lv x \rv \}$ is the radially nonincreasing majorant of $\psi_1$.
\end{lemma}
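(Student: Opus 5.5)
The plan is to bound $\Psi_* f$ pointwise by the Hardy--Littlewood maximal function and then invoke the classical Fefferman--Stein inequality~\eqref{eq:vector:hardy:littlewood}.

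\textbf{Step 1: pointwise domination.} For each $r > 0$ we have
\formula{
 \lv \Psi_r f(x) \rv & \le \int_{\R^d} \lv f(x - y) \rv \, r^{-d} \psi_1^*(r^{-1} y) \, dy .
}
Since $\psi_1^*$ is radial, nonincreasing and integrable, the layer-cake representation writes it as a superposition of normalised indicators of balls:
\formula{
 \psi_1^*(y) & = \int_0^\infty \ind_{\ball(0, \rho)}(y) \, d\mu(\rho) ,
}
with a nonnegative measure $\mu$, where $d\mu(\rho) = -d\phi(\rho)$ and $\phi$ is the radial profile of $\psi_1^*$. Fubini's theorem gives
\formula{
 \int_{\R^d} \lv \Psi_r \rv \ldots & \ldots \,\text{i.e.}\quad \int_0^\infty \lv \ball(0, 1) \rv \, \rho^d \, d\mu(\rho) = \lV \psi_1^* \rV_1 .
}
Substituting into the bound above, each ball average of $\lv f \rv$ is at most a constant multiple of $M_*^{(1)} \lv f \rv (x)$, with a constant depending only on $d$ and on the normalisation of $M^{(1)}_r$ (which involves $\Gamma$ factors and the volume of the unit ball). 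Hence
\formula{
 \Psi_* f(x) & \le C_d \lV \psi_1^* \rV_1 M_*^{(1)} \lv f \rv (x) ,
}
uniformly in $r$.

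\textbf{Step 2: Fefferman--Stein.} Applying the pointwise bound to each $f_l$ and using the monotonicity of the $L^p(\ell^q)$ norm, we get
\formula{
 \lV (\Psi_* f_1, \ldots, \Psi_* f_L) \rV_{p, q} & \le C_d \lV \psi_1^* \rV_1 \lV (M_*^{(1)} \lv f_1 \rv, \ldots, M_*^{(1)} \lv f_L \rv) \rV_{p, q} .
}
Now~\eqref{eq:vector:hardy:littlewood} bounds the right-hand side. Since it is stated for Schwartz functions, the extension to $\lv f_l \rv$ (which are merely Lipschitz and in $L^p$) follows from the density and continuity remark made right after~\eqref{eq:vector:hardy:littlewood}.

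The only point needing care is justifying the layer-cake/Fubini step when $\psi_1^*$ is not smooth. No genuine obstacle is expected there: $\psi_1^*$ is monotone, so $\mu$ is a well-defined Stieltjes measure. The boundary term at infinity vanishes because $\phi(\rho) \rho^d \to 0$, which holds since $\psi$ is Schwartz.
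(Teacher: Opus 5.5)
Your proposal is correct and follows the same route as the paper: first a pointwise bound $\Psi_* f \le C_d \lV \psi_1^* \rV_1 M_*^{(1)} \lv f \rv$, then the classical Fefferman--Stein inequality~\eqref{eq:vector:hardy:littlewood} applied to $\lv f_l \rv$. The only difference is that the paper cites Corollary~2.1.12 of Grafakos's book for the pointwise bound, whereas you reprove it through the layer-cake decomposition of $\psi_1^*$ into indicators of balls (which is essentially Grafakos's argument), and your form with $\lv f \rv$ inside the maximal operator is the precise form of that bound.
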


\begin{proof}
By Corollary~2.1.12 in~\cite{grafakos}, for every Schwartz function $f$ and $x \in \R^d$, we have
\formula{
 \Psi_* f(x) & \le \lV \psi_1^* \rV_1^{\phantom{*}} M_*^{(1)} f(x) .
}
To obtain the desired result, we apply the above estimate to $f_1, f_2, \ldots, f_L$ and combine it with the classical Fefferman--Stein inequality~\eqref{eq:vector:hardy:littlewood}.
\end{proof}

\begin{lemma}[Proposition~2.2 in~\cite{dk}]
\label{lem:vector:square}
Let $0 < \varrho_0 < \varrho_1$, and suppose that
\formula{
 \fourier_d \psi(t) & = 0 \qquad \text{if\/ $t \le \varrho_0$ or\/ $t \ge \varrho_1$.}
}
If\/ $f$ is a Schwartz function, then
\formula{
 \lV g_\psi(f) \rV_2 & \le \lV \fourier_d \psi \rV_\infty \biggl( \log \frac{\varrho_1}{\varrho_0} \biggr)^{1 / 2} \lV f \rV_2 .
}
\end{lemma}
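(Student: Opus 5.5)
This is a Plancherel computation. The square function is an $L^2(\R^d; L^2(dr/r))$-valued norm, so I will compute $\lV g_\psi(f) \rV_2^2$ exactly on the Fourier side and then bound the resulting multiplier pointwise.

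\emph{Step 1: rewrite the square of the norm.} By Fubini's theorem,
\begin{align*}
 \lV g_\psi(f) \rV_2^2 & = \int_0^\infty \lV \Psi_r f \rV_2^2 \, \frac{dr}{r} .
\end{align*}
With the normalisation $\fourier f(\xi) = \int e^{i \xi \cdot x} f(x) dx$, Plancherel's theorem reads $\lV h \rV_2^2 = (2\pi)^{-d} \lV \fourier h \rV_2^2$. Using $\fourier \Psi_r f(\xi) = \fourier_d \psi(r \lv \xi \rv) \fourier f(\xi)$, we get
\begin{align*}
 \lV g_\psi(f) \rV_2^2 & = (2 \pi)^{-d} \int_{\R^d} \biggl( \int_0^\infty \lv \fourier_d \psi(r \lv \xi \rv) \rv^2 \, \frac{dr}{r} \biggr) \lv \fourier f(\xi) \rv^2 \, d\xi .
\end{align*}
The interchange of integrals is justified because everything is nonnegative (Tonelli).

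\emph{Step 2: bound the inner integral uniformly in $\xi$.} Fix $\xi \ne 0$ and substitute $t = r \lv \xi \rv$. The measure $dr/r$ is dilation invariant, so the inner integral becomes $\int_0^\infty \lv \fourier_d \psi(t) \rv^2 \, dt/t$, which does not depend on $\xi$. By the support assumption, this integral runs only over $(\varrho_0, \varrho_1)$. Hence it is at most
\begin{align*}
 \lV \fourier_d \psi \rV_\infty^2 \int_{\varrho_0}^{\varrho_1} \frac{dt}{t} = \lV \fourier_d \psi \rV_\infty^2 \log \frac{\varrho_1}{\varrho_0} .
\end{align*}
The point $\xi = 0$ has measure zero and can be ignored.

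\emph{Step 3: conclude.} Inserting this bound and applying Plancherel back gives $\lV g_\psi(f) \rV_2^2 \le \lV \fourier_d \psi \rV_\infty^2 \log(\varrho_1/\varrho_0) \lV f \rV_2^2$, and taking square roots yields the claim.

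There is no real obstacle here. The only points needing care are that the constant $(2\pi)^{-d}$ in Plancherel's theorem appears on both sides and cancels, and that $\Psi_r f \in L^2$ for each $r$ since $\psi_r$ is integrable. The result is dimension-free because the bound depends on $\fourier_d \psi$ only through its sup norm and support.
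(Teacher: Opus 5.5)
Your proposal is correct and is essentially the paper's proof: Fubini and Plancherel move the computation to the Fourier side, and the inner integral $\int_0^\infty \lv \fourier_d \psi(r \lv \xi \rv) \rv^2 \, dr / r$ is bounded by $\lV \fourier_d \psi \rV_\infty^2 \log(\varrho_1 / \varrho_0)$ using the support condition. The paper writes this last step as $\log(\lv \xi \rv^{-1} \varrho_1) - \log(\lv \xi \rv^{-1} \varrho_0)$ instead of substituting $t = r \lv \xi \rv$, which is the same computation.
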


\begin{proof}
By a repeated application of Fubini's and Plancherel's theorems,
\formula{
 \lV g_\psi(f) \rV_2^2 & = \int_0^\infty \frac{\lV \Psi_r f \rV_2^2}{r} \, dr \\ 
 & = \frac{1}{(2 \pi)^d} \int_0^\infty \frac{\lV \fourier \Psi_r f \rV_2^2}{r} \, dr \displaybreak[0] \\
 & = \frac{1}{(2 \pi)^d} \int_{\R^d} \biggl( \int_0^\infty \frac{\lv \fourier_d \psi(r \xi) \rv^2}{r} \, dr \biggr) \lv \fourier f(\xi) \rv^2 d\xi \displaybreak[0] \\
 & \le \frac{1}{(2 \pi)^d} \lV \fourier_d \psi \rV_\infty^2 \int_{\R^d} \bigl( \log(\lv \xi \rv^{-1} \varrho_1) - \log(\lv \xi \rv^{-1} \varrho_0) \bigr) \lv \fourier f(\xi) \rv^2 d\xi \\
 & = \frac{1}{(2 \pi)^d} \lV \fourier_d \psi \rV_\infty^2 \log \biggl( \frac{\varrho_1}{\varrho_0} \biggr) \lV \fourier f \rV_2^2 = \lV \fourier_d \psi \rV_\infty^2 \log \biggl( \frac{\varrho_1}{\varrho_0} \biggr) \lV f \rV_2^2,
}
as desired.
\end{proof}

\begin{lemma}[see Proposition~3.2 in~\cite{dk}]
\label{lem:vector:high}
Let $0 < \varrho_0 < \varrho_1$, and suppose that
\formula{
 \fourier_d \psi(t) & = 0 \qquad \text{if\/ $t \le \varrho_0$ or\/ $t \ge \varrho_1$.}
}
If\/ $f$ is a Schwartz function, then
\formula{
 \lV \Psi_* f \rV_2 & \le \sqrt{2} \lV \fourier_d \psi \rV_\infty^{1 / 2} \lV \fourier_d \tilde \psi \rV_\infty^{1 / 2} \biggl( \log \frac{\varrho_1}{\varrho_0} \biggr)^{1 / 2} \lV f \rV_2 ,
}
where $\tilde \psi(s) = -s \psi'(s) - d \psi(s)$.
\end{lemma}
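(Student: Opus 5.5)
The plan is the standard trick of controlling a maximal function by a square function together with the square function of the radial derivative. Fix a Schwartz function $f$ and $x \in \R^d$. Since $\fourier \Psi_r f(\xi) = \fourier_d \psi(r \lv \xi \rv) \fourier f(\xi)$ and $\fourier_d \psi$ vanishes for $t \le \varrho_0$, we have $\Psi_r f(x) \to 0$ as $r \to 0^+$; indeed $\lv \Psi_r f(x) \rv \le (2\pi)^{-d} \lV \fourier_d \psi \rV_\infty \int_{\lv \xi \rv \ge \varrho_0 / r} \lv \fourier f(\xi) \rv d\xi$. By the fundamental theorem of calculus, for every $r > 0$,
\formula{
 \lv \Psi_r f(x) \rv^2 & = \int_0^r \frac{d}{d u} \lv \Psi_u f(x) \rv^2 \, du \le 2 \int_0^\infty \lv \Psi_u f(x) \rv \biggl\lv \frac{d}{d u} \Psi_u f(x) \biggr\rv du .
}

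Next I identify $u \frac{d}{du} \Psi_u f$ as another operator of the same type. Differentiating $\psi_u(x) = u^{-d} \psi(u^{-1} \lv x \rv)$ in $u$ gives $u \frac{d}{du} \psi_u(x) = u^{-d} \bigl( -d \psi(s) - s \psi'(s) \bigr)$ with $s = u^{-1} \lv x \rv$, that is, $u \frac{d}{du} \psi_u = \tilde \psi_u$ with $\tilde \psi$ as in the statement. Here $\tilde \psi(\lv s \rv)$ is again an even Schwartz function, so the notation applies, and $u \frac{d}{du} \Psi_u f = \tilde \Psi_u f$. Moreover, $\fourier \tilde \psi_u(\xi) = u \frac{d}{du} \fourier_d \psi(u \lv \xi \rv) = t (\fourier_d \psi)'(t)$ at $t = u \lv \xi \rv$. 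Hence $\fourier_d \tilde \psi(t) = t (\fourier_d \psi)'(t)$, which also vanishes outside $(\varrho_0, \varrho_1)$. Differentiation under the integral is justified because $f$ is Schwartz. Then the Cauchy--Schwarz inequality with respect to the measure $du / u$ gives
\formula{
 \Psi_* f(x)^2 & \le 2 \int_0^\infty \lv \Psi_u f(x) \rv \lv \tilde \Psi_u f(x) \rv \frac{du}{u} \le 2 \, g_\psi(f)(x) \, g_{\tilde \psi}(f)(x) .
}

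Finally, I integrate over $x$ and use Cauchy--Schwarz in $L^2$: $\lV \Psi_* f \rV_2^2 \le 2 \lV g_\psi(f) \rV_2 \lV g_{\tilde\psi}(f) \rV_2$. I then apply Lemma~\ref{lem:vector:square} to both $\psi$ and $\tilde \psi$, which is legitimate because both Hankel transforms are supported in $[\varrho_0, \varrho_1]$. This gives $\lV \Psi_* f \rV_2^2 \le 2 \lV \fourier_d \psi \rV_\infty \lV \fourier_d \tilde\psi \rV_\infty \log(\varrho_1/\varrho_0) \lV f \rV_2^2$, and taking square roots yields the claim.

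The only delicate points are the vanishing of $\Psi_r f(x)$ as $r \to 0^+$ and the identification of $\fourier_d \tilde \psi$ with its support property. Both follow from the Fourier-side formula above, so I do not expect a genuine obstacle.
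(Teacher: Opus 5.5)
Your proposal is correct and follows essentially the paper's argument: identify $u\frac{d}{du}\Psi_u f = \tilde\Psi_u f$ with $\fourier_d\tilde\psi(t) = t(\fourier_d\psi)'(t)$, bound $\lv\Psi_r f(x)\rv^2$ by $2\int_0^\infty \lv\Psi_u f(x)\rv\,\lv\frac{d}{du}\Psi_u f(x)\rv\,du$, use Cauchy--Schwarz to get $\lV\Psi_* f\rV_2^2 \le 2\lV g_\psi(f)\rV_2\lV g_{\tilde\psi}(f)\rV_2$, and finish with Lemma~\ref{lem:vector:square}. The differences are cosmetic. You start the fundamental theorem of calculus at $r \to 0^+$, which uses the vanishing of $\fourier_d\psi$ on $[0,\varrho_0]$, whereas the paper integrates down from $r \to \infty$. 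You also state explicitly that $\fourier_d\tilde\psi$ vanishes outside $(\varrho_0,\varrho_1)$, which the paper uses without comment.
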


\begin{proof}
We denote $\tilde \psi_r(x) = r^{-d} \tilde \psi(r^{-1} \lv x \rv)$. Observe that $\fourier \tilde \psi_r(\xi) = \fourier_d \tilde \psi(r \lv \xi \rv)$, and
\formula{
 r \tfrac{d}{d r} \psi_r(x) & = r \tfrac{d}{d r} \bigl( r^{-d} \psi(r^{-1} \lv x \rv) \bigr) \\
 & = -d r^{-d} \psi(r^{-1} \lv x \rv) - r^{-d - 1} \lv x \rv \psi'(r^{-1} \lv x \rv) \\
 & = r^{-d} \tilde \psi(r^{-1} \lv x \rv) = \tilde \psi_r(x) .
}
Hence, by $\fourier \psi_r(\xi) = \fourier \psi_1(r \xi)$ and the dominated convergence theorem,
\formula[eq:psi:tilde]{
 \fourier \tilde \psi_r(\xi) = r \xi \cdot \nabla \fourier \psi_1(r \xi) .
}
Recall that $\fourier \psi_r(\xi) = \fourier_d \psi(r \lv \xi \rv)$. By a repeated use of Plancherel's, Fubini's, and the dominated convergence theorems, one easily proves that for every $x \in \R^d$, $\Psi_r f(x)$ is differentiable with respect to $r \in (0, \infty)$,
\formula*[eq:psi:derivative]{
 \fourier \bigl( r \tfrac{d}{d r} \Psi_r f \bigr) (\xi) & = r \tfrac{d}{d r} \bigl( \fourier \Psi_1 f(r \xi) \bigr) \\
 & = r \tfrac{d}{dr} \bigl( \fourier \psi_1(r \xi) \bigr) \fourier f(\xi) \\
 & = r \xi \cdot \nabla \fourier \psi_1(r \xi) \fourier f(\xi) \\
 & = \fourier \smash{\tilde \psi_1}(r \xi) \fourier f(\xi) = \fourier_d \smash{\tilde \psi}(r \lv \xi \rv) \fourier f(\xi) ,
}
and
\formula{
 \lim_{r \to \infty} \Psi_r f(x) & = 0 .
}
It follows that for $s \in (0, \infty)$,
\formula{
 \lv \Psi_s f(x) \rv^2 & = -\int_s^\infty \tfrac{d}{d r} \bigl( \lv \Psi_r f(x) \rv^2 \bigr) dr \le 2 \int_s^\infty \lv \Psi_r f(x) \rv \lv \tfrac{d}{d r} \Psi_r f(x) \rv dr ,
}
and hence
\formula{
 \lv \Psi_* f(x) \rv^2 & \le 2 \int_0^\infty \lv \Psi_r f(x) \rv \lv \tfrac{d}{d r} \Psi_r f(x) \rv dr .
}
By the Cauchy--Schwarz inequality,
\formula{
 \lV \Psi_* f \rV_2^2 & \le 2 \int_{\R^d} \int_0^\infty \lv \Psi_r f(x) \rv \lv \tfrac{d}{d r} \Psi_r f(x) \rv dr dx \\
 & \le 2 \biggl( \int_{\R^d} \int_0^\infty r^{-1} \lv \Psi_r f(x) \rv^2 dr dx \biggr)^{1 / 2} \biggl( \int_{\R^d} \int_0^\infty r^{-1} \lv r \tfrac{d}{d r} \Psi_r f(x) \rv^2 dr dx \biggr)^{1 / 2} .
}
By~\eqref{eq:psi:derivative}, the right-hand side is equal to $2 \lV g_\psi(f) \rV_2 \lV g_{\smash{\tilde \psi}}(f) \rV_2$, and so
\formula{
 \lV \Psi_* f \rV_2^2 & \le 2 \lV g_\psi(f) \rV_2 \lV g_{\tilde \psi}(f) \rV_2 .
}
The result follows now from Lemma~\ref{lem:vector:square}.
\end{proof}

Lemma~\ref{lem:vector:high} immediately leads to the following vector-valued estimate.

\begin{corollary}
\label{cor:vector:l2}
Let $0 < \varrho_0 < \varrho_1$, and suppose that
\formula{
 \fourier_d \psi(t) & = 0 \qquad \text{if\/ $t \le \varrho_0$ or\/ $t \ge \varrho_1$.}
}
If\/ $f_1, f_2, \ldots, f_L$ are Schwartz functions, then
\formula{
 \lV (\Psi_* f_1, \Psi_* f_2, \ldots, \Psi_* f_L) \rV_{2, 2} & \le C_d \lV \fourier_d \psi \rV_\infty^{1 / 2} \lV \fourier_d \tilde \psi \rV_\infty^{1 / 2} \biggl( \log \frac{\varrho_1}{\varrho_0} \biggr)^{1 / 2} \lV (f_1, f_2, \ldots, f_L) \rV_{2, 2} ,
}
where $\tilde \psi(s) = -s \psi'(s) - d \psi(s)$.
\end{corollary}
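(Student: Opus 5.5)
The $L^2(\ell^2)$ norm splits across the components, so Lemma~\ref{lem:vector:high} applies to each function separately. Indeed, since $p = q = 2$,
\formula{
 \lV (\Psi_* f_1, \Psi_* f_2, \ldots, \Psi_* f_L) \rV_{2, 2}^2 & = \int_{\R^d} \sum_{l = 1}^L \lv \Psi_* f_l(x) \rv^2 \, dx = \sum_{l = 1}^L \lV \Psi_* f_l \rV_2^2 .
}
I would first rewrite the left-hand side in this form, using Fubini's theorem (here only a finite sum) to interchange the sum over $l$ with the integral over $x$.

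Next I would apply Lemma~\ref{lem:vector:high} to each $f_l$ individually. The hypotheses of the corollary on $\fourier_d \psi$, namely vanishing outside $(\varrho_0, \varrho_1)$, are exactly those of the lemma, and $\tilde \psi$ is defined in the same way. This gives
\formula{
 \lV \Psi_* f_l \rV_2^2 & \le 2 \lV \fourier_d \psi \rV_\infty \lV \fourier_d \tilde \psi \rV_\infty \log \frac{\varrho_1}{\varrho_0} \, \lV f_l \rV_2^2 .
}

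Summing over $l$ and using $\sum_{l} \lV f_l \rV_2^2 = \lV (f_1, f_2, \ldots, f_L) \rV_{2, 2}^2$ (the same interchange as above, now on the right-hand side), then taking square roots, yields the claim with $C_d = \sqrt{2}$. In fact this constant is independent of $d$. There is no real obstacle; the only point to check is that the constant of Lemma~\ref{lem:vector:high} does not depend on $f$, so that the bounds for the individual $f_l$ can be summed directly.
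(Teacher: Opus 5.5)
Your proposal is correct and is exactly the argument the paper has in mind when it says that Lemma~\ref{lem:vector:high} ``immediately leads to'' the corollary. The $L^2(\ell^2)$ norm decouples into $\sum_l \lV \Psi_* f_l \rV_2^2$, and applying the lemma to each $f_l$, summing, and taking square roots gives the bound with $C_d = \sqrt{2}$.
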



\subsection{Stein's generalised spherical averages}

In this section we prove Theorem~\ref{thm:vector:generalised}. Recall that Stein's generalised spherical means $M_{\smash{r}}^{(\alpha)} f(x)$ are given by~\eqref{eq:generalised} if $\re \alpha > 0$. When $\re \alpha \le 0$, $M_{\smash{r}}^{(\alpha)} f$ is the convolution of $f$ with an appropriate Schwartz distribution supported in the closed unit ball. In either case, the operators $M_{\smash{r}}^{(\alpha)}$ are Fourier multipliers, in the sense that
\formula{
 \fourier M_r^{(\alpha)} f(\xi) & = \hat M^{(\alpha)}(r \xi) \fourier f(\xi) ,
}
and the symbol $\hat M^{(\alpha)}$ is given by
\formula{
 \hat M^{(\alpha)}(\xi) & = 2^{d / 2 + \alpha - 1} \pi^{d / 2} \lv \xi \rv^{1 - d / 2 - \alpha} J_{d / 2 + \alpha - 1}(\lv \xi \rv) ;
}
see Theorem~IV.4.15 on p.~171 in~\cite{sw} or Equation~(6) in~\cite{k}. It is well-known that for $\nu \in \C$ and $s > 0$,
\formula[eq:bessel:estimate]{
 \lv s^{-\nu} J_\nu(s) \rv & \le C_\nu (1 + s)^{-\re \nu - 1 / 2}
}
(see \nist{10.14}{4} and \nist{10.17}{3} in~\cite{nist}). Furthermore, $\tfrac{d}{ds} (s^{-\nu} J_\nu(s)) = -s^{-\nu} J_{\nu + 1}(s)$ (see \nist{10.6}{6} in~\cite{nist}), and hence
\formula{
 \lv \tfrac{d}{ds} (s^{-\nu} J_\nu(s)) \rv & \le C_\nu (1 + s)^{-\re \nu - 1 / 2} .
}
It follows that
\formula*[eq:symbol:estimate]{
 \lv \hat M^{(\alpha)}(\xi) \rv & \le C_{d, \alpha} (1 + \lv \xi \rv)^{1 / 2 - d / 2 - \re \alpha} , \\
 \lv \nabla \hat M^{(\alpha)}(\xi) \rv & \le C_{d, \alpha} (1 + \lv \xi \rv)^{1 / 2 - d / 2 - \re \alpha}
}
for every $d \ge 1$, $\alpha \in \C$ and $\xi \in \R^d$.

Following the idea of Bourgain from~\cite{bourgain}, as in~\cite{dk}, we decompose the averaging operator $M_{\smash{r}}^{(\alpha)}$ into countably many terms $M_{\smash{r}}^{(\alpha, n)}$, $n = 0, 1, \ldots$\,, which correspond to frequencies of the order $2^n r^{-1}$. More precisely, we fix a smooth radial function $\hat \ph$ on $\R^d$ such that $0 \le \hat \ph \le 1$, $\hat \ph(\xi) = 1$ when $\lv \xi \rv \le 1$, and $\hat \ph(\xi) = 0$ if $\lv \xi \rv \ge 2$. We define $M_{\smash{r}}^{(\alpha, n)}$ using the Fourier transform:
\formula{
 \fourier M_r^{(\alpha, 0)} f(\xi) & = \hat \ph(r \xi) \hat M^{(\alpha)}(r \xi) \fourier f(\xi)
}
is the low frequency term, and
\formula[eq:high:symbol]{
 \fourier M_r^{(\alpha, n)} f(\xi) & = \bigl( \hat \ph(2^{-n} r \xi) - \hat \ph(2^{-n + 1} r \xi) \bigr) \hat M^{(\alpha)}(r \xi) \fourier f(\xi)
}
for $n = 1, 2, \ldots$ are the high frequency terms. Clearly, for every $\xi \in \R^d$ and $r > 0$,
\formula{
 \sum_{n = 0}^\infty \fourier M_r^{(\alpha, n)} f(\xi) & = \fourier M_r^{(\alpha)} f(\xi) .
}
Since $\fourier M_{\smash{r}}^{(\alpha)} f$ is a Schwartz function, the convergence is dominated by the integrable function $\lv \fourier M_{\smash{r}}^{(\alpha)} f(\xi) \rv$. Thus, the Fourier inversion formula and the dominated convergence theorem imply that
\formula{
 \sum_{n = 0}^\infty M_r^{(\alpha, n)} f(x) & = M_r^{(\alpha)} f(x)
}
for every $x \in \R^d$.

We denote by $M_{\smash{*}}^{(\alpha, n)} f$ the associated maximal functions:
\formula{
 M_*^{(\alpha, n)} f(x) & = \sup_{r \in (0, \infty)} \lv M_r^{(\alpha, n)} f(x) \rv .
}
Of course,
\formula{
 M_*^{(\alpha)} f(x) & \le \sum_{n = 0}^\infty M_*^{(\alpha, n)} f(x)
}
for every $x \in \R^d$.

In order to prove Theorem~\ref{thm:vector:generalised}, we later interpolate between the following $L^2(\ell^2)$ bound and the $L^p(\ell^q)$ estimate given in the next statement.

\begin{lemma}[see Proposition~3.2 in~\cite{dk}]
\label{lem:high:sharp}
If\/ $n \ge 1$, $\alpha \in \C$, and $f_1, f_2, \ldots, f_L$ are Schwartz functions, then
\formula{
 \lV (M_*^{(\alpha, n)} f_1, M_*^{(\alpha, n)} f_2, \ldots, M_*^{(\alpha, n)} f_L) \rV_{2, 2} & \le \frac{C_d}{2^{(d / 2 + \re \alpha - 1) n}} \lV (f_1, f_2, \ldots, f_L) \rV_{2, 2} .
}
\end{lemma}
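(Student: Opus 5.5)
The $L^2(\ell^2)$ norm decouples: $\lV (g_1,\ldots,g_L)\rV_{2,2}^2 = \sum_l \lV g_l\rV_2^2$. So it suffices to prove the scalar estimate
\[
\lV M_*^{(\alpha,n)} f\rV_2 \le C_d 2^{-(d/2+\re\alpha-1)n}\lV f\rV_2
\]
and then sum over $l$. This is exactly the situation of Lemma~\ref{lem:vector:high}, equivalently Corollary~\ref{cor:vector:l2}.

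We first write $M_r^{(\alpha,n)}$ in the form $\Psi_r$. The symbol of $M_r^{(\alpha,n)}$ is $m_n(r\xi)$, where
\[
m_n(\xi) = \bigl(\hat\ph(2^{-n}\xi)-\hat\ph(2^{-n+1}\xi)\bigr)\hat M^{(\alpha)}(\xi).
\]
This is a smooth radial compactly supported function, so $m_n = \fourier\psi_1$ for a radial Schwartz $\psi_1$ with profile $\psi$, and $\fourier_d\psi(t)=m_n(t)$. The support condition gives $\fourier_d\psi(t)=0$ unless $2^{n-1}\le t\le 2^{n+1}$. Hence $\varrho_1/\varrho_0=4$, and the logarithmic factor is the absolute constant $(\log 4)^{1/2}$.

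Next we bound the two sup norms. By~\eqref{eq:symbol:estimate}, on the support $t\sim 2^n$,
\[
\lV\fourier_d\psi\rV_\infty \le C_{d,\alpha}\, 2^{n(1/2-d/2-\re\alpha)}.
\]
For $\tilde\psi$ we use~\eqref{eq:psi:tilde}: $\fourier_d\tilde\psi(t) = t\,\frac{d}{dt}m_n(t)$. The derivative either falls on the cutoff, giving a factor $t\cdot 2^{-n}\lV\nabla\hat\ph\rV_\infty = O(1)$ times $\lv\hat M^{(\alpha)}\rv$, or on $\hat M^{(\alpha)}$, giving $t\lv\nabla\hat M^{(\alpha)}(t)\rv$. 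By the second line of~\eqref{eq:symbol:estimate}, both are at most
\[
C_{d,\alpha}\, 2^{n}\,2^{n(1/2-d/2-\re\alpha)}.
\]
Lemma~\ref{lem:vector:high} then gives the bound
\[
C\bigl(2^{n(1/2-d/2-\re\alpha)}\cdot 2^{n(3/2-d/2-\re\alpha)}\bigr)^{1/2} = C\,2^{-n(d/2+\re\alpha-1)},
\]
which is the claimed decay.

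The remaining issue is the constant: the statement has $C_d$, while the Bessel estimates~\eqref{eq:bessel:estimate} give constants depending on $\nu=d/2+\alpha-1$. The crude bound $C_{d,\alpha}$ is harmless for the later interpolation, but to get a constant independent of $\alpha$ I would bound the Bessel function and its derivative more carefully. For large argument this would use the standard asymptotic $\lv J_\nu(s)\rv\lesssim s^{-1/2}$, uniform for $\re\nu$ in compact sets, with the Gamma factors absorbed. If full uniformity in $\alpha$ is not achievable, I would note that $C_{d,\alpha}$ is enough for the application to Theorem~\ref{thm:vector:generalised}. The one step needing care is the identity $\fourier_d\tilde\psi(t)=t\,m_n'(t)$, which is~\eqref{eq:psi:tilde} specialised to radial functions.
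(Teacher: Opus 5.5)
Your argument is correct and is essentially the paper's proof. You realise $M_r^{(\alpha, n)}$ as $\Psi_r$ with $\fourier_d \psi$ supported in $[2^{n-1}, 2^{n+1}]$, bound $\lV \fourier_d \psi \rV_\infty$ and $\lV \fourier_d \tilde\psi \rV_\infty$ via~\eqref{eq:symbol:estimate} and~\eqref{eq:psi:tilde} (your bounds match the paper's $(1+t)^{1/2-d/2-\re\alpha}$ and $(1+t)^{3/2-d/2-\re\alpha}$), and then apply Corollary~\ref{cor:vector:l2}.

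Your caveat about the constant is also right. The paper's proof likewise only gives $C_{d, \alpha}$, and that is all Lemma~\ref{lem:high} uses. In fact an $\alpha$-independent constant is impossible: for $n = 1$, the symbol of $M_1^{(\alpha, 1)}$ at $\lv \xi \rv = 2$ equals $\pi^{d/2} J_{d/2+\alpha-1}(2)$, which is unbounded as $\lv \im \alpha \rv \to \infty$. So the refinement you sketch should not be pursued.
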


\begin{proof}
Fix $n = 1, 2, \ldots$\, By definition, $M_{\smash{r}}^{(\alpha, n)}$ is a Fourier multiplier given by~\eqref{eq:high:symbol}, and hence a convolution operator with kernel $\psi_r(x) = r^{-d} \psi(r^{-1} \lv  x \rv)$, where
\formula[eq:psi:shell]{
 \fourier_d \psi(\lv \xi \rv) & = \bigl( \hat \ph(2^{-n} \xi) - \hat \ph(2^{-n + 1} \xi) \bigr) \hat M^{(\alpha)}(\xi) .
}
Note that $\psi$ depends on $n$. Denote $\tilde \psi(s) = -s \psi'(s) - d \psi(s)$ and $\tilde \psi_r(x) = r^{-d} \tilde \psi(r^{-1} \lv x \rv)$. Recall that by~\eqref{eq:psi:tilde},
\formula{
 \fourier_d \tilde \psi(\lv \xi \rv) & = \fourier \tilde \psi_1(\xi) = \xi \cdot \nabla \fourier \psi_1(\xi) .
}
Combining this with~\eqref{eq:symbol:estimate} and~\eqref{eq:psi:shell}, we find that
\formula{
 \lv \fourier_d \psi(t) \rv & \le C_{d, \alpha} (1 + t)^{1 / 2 - d / 2 - \re \alpha} , \\
 \lv \fourier_d \tilde \psi(t) \rv & \le C_{d, \alpha} (1 + t)^{3 / 2 - d / 2 - \re \alpha} .
}
Furthermore, by the definition of $\hat \ph$, $\fourier_d \psi(t) = \fourier_d \tilde \psi(t) = 0$ if $t \le 2^{n - 1}$ or $t \ge 2^{n + 1}$. Thus,
\formula{
 \lV \fourier_d \psi \rV_\infty & \le C_{d, \alpha} 2^{-(d / 2 + \re \alpha - 1 / 2) n} , \\
 \lV \fourier_d \tilde \psi \rV_\infty & \le C_{d, \alpha} 2^{-(d / 2 + \re \alpha - 3 / 2) n} .
}
The desired estimate follows now directly from Corollary~\ref{cor:vector:l2}.
\end{proof}

\begin{lemma}[see Proposition~3.3 in~\cite{dk}]
\label{lem:high:crude}
If\/ $n \ge 1$, $\alpha \in \C$, $p, q \in (1, \infty)$, and $f_1, f_2, \ldots, f_L$ are Schwartz functions, then
\formula{
 \lV (M_*^{(\alpha, n)} f_1, M_*^{(\alpha, n)} f_2, \ldots, M_*^{(\alpha, n)} f_L) \rV_{p, q} & \le C_{d, \alpha, p, q} 2^{(1 - \re \alpha) n} \lV (f_1, f_2, \ldots, f_L) \rV_{p, q} .
}
\end{lemma}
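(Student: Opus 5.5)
The plan is to apply Lemma~\ref{lem:vector:low} to the radial kernel $\psi = \psi^{(n)}$ of $M_{\smash{r}}^{(\alpha, n)}$, which is defined by~\eqref{eq:psi:shell}. This reduces the claim to a bound on the $L^1$ norm of the radially nonincreasing majorant,
\formula{
 \lV \psi_1^* \rV_1 & \le C_{d, \alpha} 2^{(1 - \re \alpha) n} ,
}
because Lemma~\ref{lem:vector:low} combined with the classical Fefferman--Stein inequality~\eqref{eq:vector:hardy:littlewood} then yields the assertion with constant $C_{d, p, q} \lV \psi_1^* \rV_1$. The whole task is therefore to estimate the kernel $\psi_1(x)$ pointwise, using only information about its Fourier transform. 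This is where we avoid the Funk--Hecke formula and estimate the Fourier transform directly.

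We use the standard bound $\lv \psi_1(x) \rv \le \lV \psi_1^* \rV_\infty$-type estimates in the form
\formula{
 \sup_x (1 + \lv x \rv)^{N} \lv \psi_1(x) \rv & \le C_{d, N} \sum_{\lv \beta \rv \le N} \lV \partial^\beta \fourier \psi_1 \rV_1 ,
}
with $N = d + 1$. This bound follows from the identity $x^\beta \psi_1 = (2\pi)^{-d} \fourier^{-1}[(i\partial)^\beta \fourier\psi_1]$ (up to signs and constants). The resulting pointwise bound $C (1 + \lv x \rv)^{-d-1}$ is radially decreasing, so it dominates $\psi_1^*$, and hence $\lV \psi_1^* \rV_1 \le C_d \sum_{\lv\beta\rv \le d+1} \lV \partial^\beta \fourier\psi_1 \rV_1$.

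The symbol $\fourier \psi_1(\xi) = (\hat\ph(2^{-n}\xi) - \hat\ph(2^{-n+1}\xi)) \hat M^{(\alpha)}(\xi)$ is supported in the shell $2^{n-1} \le \lv\xi\rv \le 2^{n+1}$, whose volume is about $2^{dn}$. By~\eqref{eq:bessel:estimate} and the recursion $\tfrac{d}{ds}(s^{-\nu}J_\nu) = -s^{1}\cdot s^{-(\nu+1)}J_{\nu+1}$, each derivative of $\hat M^{(\alpha)}$ keeps the decay $(1+\lv\xi\rv)^{1/2 - d/2 - \re\alpha}$. The derivatives of the cutoff only gain factors $2^{-n}$. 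Hence $\lV \partial^\beta \fourier \psi_1 \rV_1 \le C_{d,\alpha} 2^{dn} 2^{(1/2 - d/2 - \re\alpha) n}$.

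\textbf{Main obstacle.} This crude bound gives $2^{(d/2 + 1/2 - \re\alpha)n}$, which is too large. To reach $2^{(1-\re\alpha)n}$ one must exploit the localisation of the kernel near the unit sphere, where $\psi_1$ is essentially $\hat M^{(\alpha)}$ smoothed at scale $2^{-n}$. I would therefore rescale: write $\psi_1 = \Phi_n * m^{(\alpha)}$, where $m^{(\alpha)}$ is the (distributional) kernel of $M_1^{(\alpha)}$ and $\Phi_n$ is the Littlewood--Paley piece at scale $2^{-n}$.

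\textbf{Case $\re\alpha \ge 1$, or after integrating by parts in the radial variable.} Note that $m^{(\alpha)}$ is of the form $\Gamma(\alpha)^{-1}(1-\lv y\rv^2)_+^{\alpha-1}$. Convolving it with $\Phi_n$ produces a function of size about $2^{(1-\re\alpha)n}$ in a $2^{-n}$-neighbourhood of the sphere, with rapid decay away from the sphere.

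\textbf{Case $\re\alpha < 1$.} Here I would use analytic continuation together with the identity $M^{(\alpha)}$ expressed through $(1-\lv y\rv^2)^{\alpha+j-1}$ differentiated $j$ times in the radial direction. The $j$ radial derivatives fall on $\Phi_n$ and each costs $2^n$.

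In both cases, the pointwise bound has the form $2^{(1-\re\alpha)n}$ times a radially decreasing function (after handling the interior by a majorant like $2^{(1-\re\alpha)n}(1+2^n \dist(x,\sph))^{-N}$, whose majorant remains integrable uniformly) plus rapidly decaying tails. This is the step where the care is needed; the rest of the argument is bookkeeping.
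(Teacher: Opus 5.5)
Your route is viable, and it differs from the paper's only in how you bound the kernel near the unit sphere. The reduction is the same as in the paper: apply Lemma~\ref{lem:vector:low} and prove $\lv\psi(s)\rv\le C_{d,\alpha}2^{(1-\re\alpha)n}(1+s)^{-d-1}$. Away from $\lv x\rv=1$ the paper also uses your representation $\psi_1=M_1^{(\alpha)}\eta_{2^{-n}}$ (your $\Phi_n*m^{(\alpha)}$). There the vanishing moments of $\eta$ give $\lv\psi_1(x)\rv\le C2^{-mn}(1+\lv x\rv)^{-m}$. Your ``rapid decay away from the sphere'' is asserted without proof, and it is not automatic when $\re\alpha\le0$, because $m^{(\alpha)}$ is then only a distribution. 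Lemma~\ref{lem:auxiliary} in the appendix is the rigorous version of that claim.

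Near the sphere, the paper stays on the Fourier side. It writes $\psi$ as the inverse Hankel transform, as in~\eqref{eq:hankel}, and inserts the Bessel bound~\eqref{eq:bessel:estimate}:
\[
\lv(st)^{1-d/2}J_{d/2-1}(st)\rv\le C(st)^{1/2-d/2}.
\]
For $s\approx1$ and $t\approx2^n$ this is exactly the factor $2^{-(d-1)n/2}$ that your crude estimate loses. You lose it by bounding $\lv e^{ix\cdot\xi}\rv$ by $1$, instead of first integrating over the angular variable, which is what produces the Bessel kernel. With that factor restored, one gets $\lv\psi(s)\rv\le C2^{(1-\re\alpha)n}$ for $\tfrac12\le s\le\tfrac32$ in two lines. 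The same argument works for every $\alpha\in\C$, with no case distinction.

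Your spatial alternative can be completed. It uses the size of $(1-\lv y\rv^2)_+^{\alpha-1}$ at scale $2^{-n}$ when $0<\re\alpha\le1$, its Hölder regularity against the vanishing moments of $\Phi_n$ when $\re\alpha>1$, and a shift of $\alpha$ upward when $\re\alpha\le0$. It even gives a finer $2^{-n}$-localisation, but that extra information is not needed: $\psi_1^*$ equals the near-sphere supremum on the whole unit ball anyway.

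One step needs repair. The operator $\tfrac{1}{\lv y\rv}\partial_{\lv y\rv}$ does not commute with convolution, so the radial derivatives do not simply ``fall on $\Phi_n$''. A clean substitute is the recurrence $J_\nu+J_{\nu+2}=\tfrac{2(\nu+1)}{s}J_{\nu+1}$ with $\nu=\tfrac d2+\alpha-1$, which gives
\[
M_1^{(\alpha)}=(\tfrac d2+\alpha)M_1^{(\alpha+1)}+\tfrac14\Delta M_1^{(\alpha+2)}.
\]
The Laplacian then lands on $\eta_{2^{-n}}$ at cost $2^{2n}$, and $\Delta\eta$ still has vanishing moments. Iterating this identity raises $\re\alpha$ above $0$ while preserving the bound $2^{(1-\re\alpha)n}$.
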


\begin{proof}
Fix $n = 1, 2, \ldots$\, Let $\psi$ and $\psi_r$ be defined as in the proof of Lemma~\ref{lem:high:sharp}; in particular,
\formula[eq:psi:fourier]{
 \fourier_d \psi(\lv \xi \rv) & = \fourier \psi_1(\xi) = \bigl( \hat \ph(2^{-n} \xi) - \hat \ph(2^{-n + 1} \xi) \bigr) \hat M^{(\alpha)}(\xi) .
}
We claim that
\formula[eq:psi:estimate]{
 \lv \psi(s) \rv & \le C_{d, \alpha} 2^{(1 - \re \alpha) n} (1 + s)^{-d - 1}
}
for all $s \ge 0$. Once this is established, the desired result follows immediately from Lemma~\ref{lem:vector:low}.

Using~\eqref{eq:symbol:estimate} and the definition of $\hat \ph$, we find that
\formula[eq:hankel:estimate]{
 \lv \fourier \psi_1(\xi) \rv & \le C_{d, \alpha} 2^{-(d / 2 + \re \alpha - 1 / 2) n}
}
if $2^{n - 1} \le \lv \xi \rv \le 2^{n + 1}$, and $\fourier \psi_1(\xi) = 0$ otherwise. By~\eqref{eq:psi:fourier}, the function $\psi$ is the inverse Hankel transform of $\fourier_d \psi$, the radial profile of $\fourier \psi_1$. Thus, as in~\eqref{eq:hankel}, we have
\formula{
 \psi(s) & = (2 \pi)^{-d / 2} \int_0^\infty t^{d - 1} \fourier_d \psi(t) (s t)^{1 - d / 2} J_{d / 2 - 1}(s t) dt .
}
Combining this with~\eqref{eq:hankel:estimate}, we obtain
\formula{
 \lv \psi(s) \rv & \le C_{d, \alpha} 2^{-(d / 2 + \re \alpha - 1 / 2) n} \int_{2^{n - 1}}^{2^{n + 1}} t^{d - 1} \bigl\lv (s t)^{1 - d / 2} J_{d / 2 - 1}(s t) \bigr\rv dt .
}
Using the estimate~\eqref{eq:bessel:estimate} of the Bessel function, we obtain
\formula{
 \lv \psi(s) \rv & \le C_{d, \alpha} 2^{-(d / 2 + \re \alpha - 1 / 2) n} \int_{2^{n - 1}}^{2^{n + 1}} t^{d - 1} (s t)^{1 / 2 - d / 2} dt \\
 & \le C_{d, \alpha} 2^{(1 - \re \alpha) n} s^{1 / 2 - d / 2} .
}
This clearly implies~\eqref{eq:psi:estimate} when $\tfrac{1}{2} \le s \le \tfrac{3}{2}$.

Let $\ph$ be the inverse Fourier transform of $\hat \ph$. Denote
\formula{
 \eta(x) & = \ph(x) - 2^{-d} \ph(\tfrac{1}{2} x) ,
}
and let $\eta_r(x) = r^{-d} \eta(r^{-1} x)$. The function $\hat \ph(2^{-n} \xi) - \hat \ph(2^{-n + 1} \xi)$ is thus the Fourier transform of $\eta_{2^{-n}}$. By~\eqref{eq:psi:fourier}, it follows that
\formula{
 \psi_1(x) & = M_{\smash{1}}^{(\alpha)} \eta_{2^{-n}}(x) .
}
Stein's generalised mean $M_{\smash{1}}^{(\alpha)} \eta_r(x)$ is the convolution of $\eta_r$ with a Schwartz distribution, which is equal to zero on $\R^d \setminus \overline{\ball(0, 1)}$, equal to a smooth function on $\ball(0, 1)$, and singular on $\partial \ball(0, 1)$; see~\cite{stein}. Since $\fourier \eta$ is equal to $0$ in $\ball(0, \tfrac{1}{2})$, all moments of $\eta_r(x) = r^{-d} \eta(r^{-1} x)$ are equal to zero. A somewhat technical, but standard ``approximate identity'' argument shows that away from $\partial \ball(0, 1)$, $M_{\smash{1}}^{(\alpha)} \eta_r(x)$ converges uniformly and rapidly to zero as $r \to 0^+$. That is, for every $m \ge 0$ and $\eps > 0$, we have
\formula{
 \lv M_1^{(\alpha)} \eta_r(x) \rv & \le C_{d, \alpha, \eps, m} r^m (1 + \lv x \rv)^{-m},
}
uniformly with respect to $x$ such that $\lv x \rv \le 1 - \eps$ or $\lv x \rv \ge 1 + \eps$. We postpone the details to Lemma~\ref{lem:auxiliary} in Appendix~\ref{sec:convergence}. It follows that
\formula{
 \lv \psi_1(x) \rv & \le C_{d, \alpha, \eps, m} 2^{-m n} (1 + \lv x \rv)^{-m} ,
}
uniformly with respect to $x$ such that $\lv x \rv \le 1 - \eps$ or $\lv x \rv \ge 1 + \eps$. By choosing $\eps = \tfrac{1}{2}$ and $m$ such that $m \ge \re \alpha - 1$ and $m \ge d + 1$, we obtain~\eqref{eq:psi:estimate} for $s \le \tfrac{1}{2}$ or $s \ge \tfrac{3}{2}$. This completes the proof.
\end{proof}

\begin{lemma}[see the proof of Theorem~1.2 in~\cite{dk}]
\label{lem:high}
If\/ $n \ge 1$, $\alpha \in \C$, $p, q \in (1, \infty)$,
\formula[eq:high:condition]{
 \gamma & < d \min \biggl\{ \frac{1}{p} , 1 - \frac{1}{p}, \frac{1}{q} , 1 - \frac{1}{q} \biggr\} + \re \alpha - 1 ,
}
and $f_1, f_2, \ldots, f_L$ are Schwartz functions, then
\formula{
 \biggl\lV \biggl( \sum_{l = 1}^L (M_*^{(\alpha, n)} f_l)^q \biggr)^{1 / q} \biggr\rV_p & \le C_{d, \alpha, p, q, \gamma} 2^{-\gamma n} \biggl\lV \biggl( \sum_{l = 1}^L \lv f_l \rv^q \biggr)^{1 / q} \biggr\rV_p .
}
\end{lemma}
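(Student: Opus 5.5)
We interpolate between the $L^2(\ell^2)$ bound of Lemma~\ref{lem:high:sharp}, which decays like $2^{-(d/2 + \re \alpha - 1) n}$, and the crude $L^{p_0}(\ell^{q_0})$ bound of Lemma~\ref{lem:high:crude}, which grows like $2^{(1 - \re \alpha) n}$. The interpolation is done in the mixed-norm spaces $L^p(\ell^q)$. To make the maximal operator linear, we linearise in the standard way. Fix an arbitrary measurable choice $x \mapsto r_l(x) \in (0, \infty)$ for each $l$, and consider the linear operator
\[
T(f_1, \ldots, f_L)(x) = \bigl(M_{r_1(x)}^{(\alpha, n)} f_1(x), \ldots, M_{r_L(x)}^{(\alpha, n)} f_L(x)\bigr).
\]
(By continuity in $r$ we may restrict $r_l$ to a countable dense set, and then to finitely many values.) Both lemmas bound $T$ uniformly in the choice of the functions $r_l$. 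The complex interpolation identity $[L^{p_0}(\ell^{q_0}), L^2(\ell^2)]_\theta = L^{p}(\ell^{q})$, with $1/p = (1 - \theta)/p_0 + \theta/2$ and $1/q = (1 - \theta)/q_0 + \theta/2$, is classical (Benedek--Panzone). Riesz--Thorin for mixed norms then gives
\[
\lV T \rV_{L^p(\ell^q) \to L^p(\ell^q)} \le C \, 2^{(1 - \theta)(1 - \re \alpha) n} \, 2^{-\theta(d/2 + \re \alpha - 1) n} = C \, 2^{-(\theta d/2 + \re \alpha - 1) n} .
\]
Taking the supremum over linearisations yields the claim with $\gamma = \theta d/2 + \re \alpha - 1$.

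It remains to optimise $\theta$. Given $p, q$, we want $\theta$ as large as possible, which means choosing $p_0, q_0$ as close as allowed to $1$ or $\infty$. If $p, q \le 2$, we take $p_0, q_0 \to 1^+$. Then $1/p = 1 - \theta/2 + O(\eps)$, so $\theta$ can be made arbitrarily close to $2 - 2/p = 2(1 - 1/p)$. Imposing the same for $q$, we get $\theta$ arbitrarily close to $2 \min\{1 - 1/p, 1 - 1/q\}$, and hence $\gamma$ arbitrarily close to $d \min\{1 - 1/p, 1 - 1/q\} + \re \alpha - 1$.

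One subtlety is that $p$ and $q$ must correspond to the same $\theta$. We handle this by choosing $\theta$ first and then solving for $p_0$ and $q_0$ separately. For a fixed $\theta < 2 \min\{1 - 1/p, 1 - 1/q\}$, the equations $1/p_0 = (1/p - \theta/2)/(1 - \theta)$ and $1/q_0 = (1/q - \theta/2)/(1 - \theta)$ give values in $(0, 1)$, so $p_0, q_0 \in (1, \infty)$. Symmetrically, taking $p_0, q_0$ large handles $p, q \ge 2$ and gives $\theta$ up to $2 \min\{1/p, 1/q\}$. In the mixed case, say $p \le 2 \le q$, we take $p_0$ near $1$ and $q_0$ near $\infty$. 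The constraints then read $\theta < 2(1 - 1/p)$ and $\theta < 2/q$.

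In every case, any $\theta < 2 \min\{1/p, 1 - 1/p, 1/q, 1 - 1/q\}$ is admissible. The resulting values of $\gamma = \theta d/2 + \re \alpha - 1$ fill exactly the range \eqref{eq:high:condition}. The constant depends on $p_0$ and $q_0$, which are determined by $p$, $q$ and $\gamma$, and so it is of the form $C_{d, \alpha, p, q, \gamma}$.

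The main point requiring care is the linearisation and mixed-norm interpolation: we must ensure the two endpoint bounds hold for the linearised operator with constants independent of $r_l(\cdot)$. This is immediate, since the lemmas bound the supremum pointwise. The rest is bookkeeping of exponents.
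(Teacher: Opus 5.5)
Your proposal is correct and is essentially the paper's proof: you interpolate the $L^2(\ell^2)$ bound of Lemma~\ref{lem:high:sharp} with the $L^{p_0}(\ell^{q_0})$ bound of Lemma~\ref{lem:high:crude}, solving for $p_0, q_0 \in (1, \infty)$ given any $\thet < 2 \min\{\frac{1}{p}, 1 - \frac{1}{p}, \frac{1}{q}, 1 - \frac{1}{q}\}$, and obtain the decay $2^{-(\thet d / 2 + \re \alpha - 1) n}$. The only difference is presentational: you linearise the supremum with measurable radii $r_l(x)$ and interpolate between $L^p(\ell^q)$ spaces, while the paper interpolates directly into $L^p(\ell^q(L^\infty))$ via an ``appropriate variant'' of Riesz--Thorin, so your linearisation is just the standard justification of that step (and the case $\gamma \le \re \alpha - 1$ follows trivially, since $2^{-\gamma n}$ decreases in $\gamma$ for $n \ge 1$).
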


\begin{proof}
Suppose that $p_0, q_0 \in (1, \infty)$ and $\thet \in (0, 1)$ are chosen so that if $p_1 = q_1 = 2$, then
\formula{
 \frac{1}{p} & = \frac{1 - \thet}{p_0} + \frac{\thet}{p_1} , \qquad \frac{1}{q} = \frac{1 - \thet}{q_0} + \frac{\thet}{q_1} .
}
Note that given $\thet \in (0, 1)$, such a choice of $p_0$ and $q_0$ is possible as long as
\formula{
 \thet & < 2 \min \biggl\{ \frac{1}{p} , 1 - \frac{1}{p}, \frac{1}{q} , 1 - \frac{1}{q} \biggr\} .
}
Consider the linear operator
\formula{
 T^{(\alpha, n)} F & = G , 
}
acting on vectors $F = (f_1, f_2, \ldots, f_L)$ of Schwartz functions $f_1, f_2, \ldots, f_L$, and taking values $G = (g_1, g_2, \ldots, g_L)$, where
\formula{
 g_l(x, r) & = M_r^{(\alpha, n)} f_l(x) .
}
Define the $L^p(\ell^q)$ norm of $F$ as usual,
\formula{
 \lV F \rV_{p, q} & = \biggl( \int_{\R^d} \biggl(\sum_{l = 1}^L \lv f_l(x) \rv^q \biggr)^{p / q} dx \biggr)^{1 / p} ,
}
and let the $L^p(\ell^q(L^\infty))$ norm of $G$ be
\formula{
 \lV G \rV_{p, q, \infty} & = \biggl( \int_{\R^d} \biggl(\sum_{l = 1}^L \biggl( \sup_{r \in (0, \infty)} \lv g_l(x, r) \rv \biggr)^q \biggr)^{p / q} dx \biggr)^{1 / p} .
}
By Lemma~\ref{lem:high:sharp}, we know that $T^{(\alpha, n)}$ is a bounded operator from $L^2(\ell^2)$ to $L^2(\ell^2(L^\infty))$, with norm at most $C_d 2^{-(d / 2 + \re \alpha - 1) n}$. On the other hand, by Lemma~\ref{lem:high:crude}, the norm of $T^{(\alpha, n)}$ as an operator from $L^{p_0}(\ell^{q_0})$ to $L^{p_0}(\ell^{q_0}(L^\infty))$ does not exceed $C_{\smash{d, \alpha, p_0, q_0}} 2^{(1 - \re \alpha) n}$. By an appropriate variant of the Riesz--Thorin interpolation theorem, $T^{(\alpha, n)}$ is a bounded operator from $L^p(\ell^q)$ to $L^p(\ell^q(L^\infty))$, with norm at most
\formula{
 C_{d, \alpha, p_0, q_0} 2^{(1 - \thet) (1 - \re \alpha) n - \thet (d / 2 + \re \alpha - 1) n} & = C_{d, \alpha, p_0, q_0} 2^{-(\thet d / 2 + \re \alpha - 1) n} .
}
By choosing $\thet$ such that $\tfrac{\thet d}{2} + \re \alpha - 1 \ge \gamma$, we obtain the desired result.
\end{proof}

We are ready to prove~\eqref{eq:vector:generalised}.

\begin{proof}[Proof of Theorem~\ref{thm:vector:generalised}]
By assumption, there is $\gamma > 0$ satisfying~\eqref{eq:high:condition}. Lemma~\ref{lem:high} implies that
\formula{
 \lV (M_*^{(\alpha, n)} f_1, M_*^{(\alpha, n)} f_2, \ldots, M_*^{(\alpha, n)} f_L) \rV_{p, q} & \le C_{d, \alpha, p, q} 2^{-\gamma n} \lV (f_1, f_2, \ldots, f_L) \rV_{p, q}
}
for every $n \ge 1$. Furthermore, $M_{\smash{r}}^{(\alpha, 0)}$ is a Fourier multiplier with radial symbol $\hat \ph(r \xi) \hat M^{(\alpha)}(r \xi)$. By Lemma~\ref{lem:vector:low} applied to the radial profile $\psi$ of the symbol of $M_{\smash{1}}^{(\alpha, 0)}$, we obtain
\formula{
 \lV (M_*^{(\alpha, 0)} f_1, M_*^{(\alpha, 0)} f_2, \ldots, M_*^{(\alpha, 0)} f_L) \rV_{p, q} & \le C_{d, \alpha, p, q} \lV (f_1, f_2, \ldots, f_L) \rV_{p, q} .
}
The monotonicity and the convexity of the norm lead to
\formula{
 \lV (M_*^{(\alpha)} f_1, M_*^{(\alpha)} f_2, \ldots, M_*^{(\alpha)} f_L) \rV_{p, q} & \le \sum_{n = 0}^\infty \lV (M_*^{(\alpha, n)} f_1, M_*^{(\alpha, n)} f_2, \ldots, M_*^{(\alpha, n)} f_L) \rV_{p, q} \\
 & \le C_{d, \alpha, p, q} \lV (f_1, f_2, \ldots, f_L) \rV_{p, q} ,
}
as desired.
\end{proof}


\subsection{Derivatives of spherical averages}

The proof of~\eqref{eq:vector:stein} is now quite simple.

\begin{proof}[Proof of Theorem~\ref{thm:vector:stein}]
In Section~2.7 in~\cite{k}, it is argued that if the Fefferman--Stein inequality~\eqref{eq:vector:stein} holds in dimension $d$ with some constant, then it automatically holds in every higher dimension, with the same constant. Thus, it is sufficient to prove~\eqref{eq:vector:stein} with a constant $C_{d, k, p, q}$ that may depend on the dimension.

By Equation~(9) in~\cite{k}, for every $k \ge 0$ we have
\formula{
 A_r^{(k)} f(x) & = \frac{2^k \Gamma(\tfrac{d}{2})}{\pi^{d / 2}} \biggl( M_r^{(-k)} f(x) + \sum_{j = 0}^{k - 1} C_{d, k, j} M_r^{(-j)} f(x) \biggr) .
}
By an application of Theorem~\ref{thm:vector:generalised} with $\alpha = -j$, $j = 0, 1, \ldots, k$, we obtain the desired Fefferman--Stein inequality~\eqref{eq:vector:stein}, with a dimension-dependent constant $C_{d, k, p, q}$. As remarked above, this completes the proof.
\end{proof}

%
%

\appendix
\section{Rapid convergence result}
\label{sec:convergence}

In this section we prove a convergence result needed in the proof of Lemma~\ref{lem:high:crude}. We begin with an auxiliary lemma.

\begin{lemma}
\label{lem:auxiliary:singular}
Suppose that $\Lambda$ is a compactly supported Schwartz distribution supported in a compact set $K$. Let $\eta$ be a Schwartz function, and define $\eta_r(x) = r^{-d} \eta(r^{-1} x)$. Then, for every $\eps > 0$, $m \ge 0$, $r \in (0, 1]$ and $x \in \R^d$ such that $\dist(x, K) \ge \eps$, we have
\formula{
 \lv \Lambda * \eta_r(x) \rv & \le C_{\Lambda, \eta, K, \eps, m} \frac{r^m}{(1 + \lv x \rv)^m} .
}
\end{lemma}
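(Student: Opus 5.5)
The plan is to use the structure theorem for compactly supported distributions: $\Lambda$ has finite order $N$, so there is a constant $C_\Lambda$ and a compact neighbourhood $K' = \{y : \dist(y, K) \le \eps/2\}$ such that
\formula{
 \lv \langle \Lambda, \phi \rangle \rv & \le C_{\Lambda, K, \eps} \sum_{\lv \beta \rv \le N} \sup_{y \in K'} \lv \partial^\beta \phi(y) \rv
}
for every smooth $\phi$. Since $\Lambda * \eta_r(x) = \langle \Lambda, \eta_r(x - \cdot) \rangle$, it suffices to bound derivatives of $y \mapsto \eta_r(x - y)$ for $y \in K'$. The key geometric fact is that for $y \in K'$ and $\dist(x, K) \ge \eps$ we have $\lv x - y \rv \ge \eps/2$; moreover $\lv x - y \rv \ge c_{K, \eps}(1 + \lv x \rv)$, because $K'$ is bounded (so for large $\lv x \rv$ the distance is comparable to $\lv x \rv$, and for bounded $x$ it is at least $\eps/2$, which is comparable to $1 + \lv x \rv$ on a bounded set).

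Next, I compute $\partial_y^\beta [\eta_r(x - y)] = (-1)^{\lv \beta \rv} r^{-d - \lv \beta \rv} (\partial^\beta \eta)(r^{-1}(x - y))$. Since $\eta$ is Schwartz, for every $M$ we have $\lv \partial^\beta \eta(z) \rv \le C_{\eta, \beta, M} \lv z \rv^{-M}$. Hence
\formula{
 \lv \partial_y^\beta [\eta_r(x - y)] \rv & \le C_{\eta, \beta, M} \, r^{M - d - \lv \beta \rv} \lv x - y \rv^{-M} \le C_{\eta, K, \eps, \beta, M} \, r^{M - d - \lv \beta \rv} (1 + \lv x \rv)^{-M} .
}
Choosing $M = m + d + N$ and using $r \le 1$ (so that $r^{M - d - \lv \beta \rv} \le r^{m}$ for all $\lv \beta \rv \le N$), and $(1 + \lv x \rv)^{-M} \le (1 + \lv x \rv)^{-m}$, summing over $\lv \beta \rv \le N$ gives the claimed bound $C r^m (1 + \lv x \rv)^{-m}$.

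The only delicate point is the first step: justifying the finite-order seminorm estimate with the supremum taken over a neighbourhood $K'$ of $K$ rather than the whole space. This is the standard fact (e.g.\ Hörmander, Theorem~2.3.10) that a distribution of order $N$ supported in $K$ satisfies such an estimate on any neighbourhood of $K$; alternatively, I would insert a cutoff $\chi \in C_c^\infty$ equal to $1$ near $K$ and supported in $K'$, write $\langle \Lambda, \phi \rangle = \langle \Lambda, \chi \phi \rangle$, and apply continuity of $\Lambda$ on $C^\infty$ together with the Leibniz rule. Everything else is routine.
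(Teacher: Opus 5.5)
Your proof is correct and follows essentially the paper's route: the finite order of $\Lambda$, the scaling identity $\partial^\beta \eta_r = r^{-d-\lv\beta\rv}(\partial^\beta\eta)(r^{-1}\,\cdot)$, and the Schwartz decay of $\eta$ at the points $r^{-1}(x-y)$, where $\lv x-y\rv \ge c_{K,\eps}(1+\lv x\rv)$. Your version is slightly more careful than the paper's in two respects: you take the seminorm over the neighbourhood $K'$, which gives $\lv x-y\rv \ge \eps/2$, and you keep the factor $r^{-d}$; the paper glosses over both by taking the supremum directly over $\lv y\rv \ge \dist(x,K)$.
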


\begin{proof}
Let $k$ be the order of the distribution $\Lambda$. Fix $x \in \R^d \setminus K$ and denote $\delta = \dist(x, K)$. Since $\Lambda = 0$ in the ball $\ball(x, \delta)$, we have
\formula{
 \lv \Lambda * \psi(x) \rv & \le C_\Lambda \sup \{ \lv \partial^\alpha \psi(y) \rv : \lv \alpha \rv \le k , \lv y \rv \ge \delta \} ;
}
here $\alpha$ is a multi-index, $\partial^\alpha$ is the corresponding partial derivative, and $y \in \R^d$. Substituting $\psi = \eta_r$ with $r \in (0, 1]$, we obtain
\formula{
 \lv \Lambda * \eta_r(x) \rv & \le C_\Lambda \sup \{ r^{-\lv \alpha \rv} \lv \partial^\alpha \eta(r^{-1} y) \rv : \lv \alpha \rv \le k , \lv y \rv \ge \delta \} \\
 & \le C_\Lambda r^{-k} \sup \{ \lv \partial^\alpha \eta(z) \rv : \lv \alpha \rv \le k , \lv z \rv \ge r^{-1} \delta \} .
}
Since $\eta$ is a Schwartz function, it follows that
\formula{
 \lv \Lambda * \eta_r(x) \rv & \le C_{\Lambda, \eta} r^{-k} (1 + r^{-1} \delta)^{-k - 2 m} .
}
If $\delta \ge \eps$, we obtain
\formula{
 \lv \Lambda * \eta_r(x) \rv & \le C_{\Lambda, \eta} r^{-k} (r^{-1} \eps)^{-k - m} (1 + \delta)^{-m} = C_{\Lambda, \eta, \eps, m} r^m (1 + \delta)^{-m} .
}
It remains to observe that $1 + \lv x \rv \le C_{K, \eps} (1 + \delta)$.
\end{proof}

\begin{lemma}
\label{lem:auxiliary}
Suppose that $\Lambda$ is a compactly supported Schwartz distribution, which is equal to a smooth function $f$ in the complement of a compact set $K$. Let $\eta$ be a Schwartz function such that all moments of $\eta$ are equal to zero, and define $\eta_r(x) = r^{-d} \eta(r^{-1} x)$. Then, for every $\eps > 0$, $m \ge 0$, $r \in (0, 1]$ and $x \in \R^d$ such that $\dist(x, K) \ge \eps$, we have
\formula{
 \lv \Lambda * \eta_r(x) \rv & \le C_{\Lambda, \eta, \eps, m} \frac{r^m}{(1 + \lv x \rv)^m} .
}
\end{lemma}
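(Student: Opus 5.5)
We reduce Lemma~\ref{lem:auxiliary} to Lemma~\ref{lem:auxiliary:singular} by a smooth cutoff. Fix $\eps > 0$ and choose a smooth function $\chi$ with $\chi = 1$ on the set $\{y : \dist(y, K) \le \eps/3\}$ and $\chi = 0$ on $\{y : \dist(y, K) \ge 2\eps/3\}$. Write $\Lambda = \chi \Lambda + (1 - \chi) \Lambda = \Lambda_1 + \Lambda_2$. Then $\Lambda_1$ is a compactly supported distribution supported in the compact set $K_1 = \{y : \dist(y, K) \le 2\eps/3\}$. Every $x$ with $\dist(x, K) \ge \eps$ satisfies $\dist(x, K_1) \ge \eps/3$. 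Lemma~\ref{lem:auxiliary:singular}, applied to $\Lambda_1$, $K_1$ and $\eps/3$, therefore gives $\lv \Lambda_1 * \eta_r(x) \rv \le C r^m (1 + \lv x \rv)^{-m}$ with the required uniformity. This part does not use the vanishing moments.

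The second piece $\Lambda_2 = (1 - \chi) f$ is a compactly supported smooth function $g \in C_c^\infty(\R^d)$. It is compactly supported because $\Lambda$ is, and smooth because $\Lambda = f$ off $K$ and $1 - \chi$ vanishes near $K$. For it we use the vanishing moments of $\eta$ via Taylor expansion. We have
\formula{
 g * \eta_r(x) & = \int_{\R^d} \bigl( g(x - r z) - T_N(x, r z) \bigr) \eta(z) \, dz ,
}
where $T_N(x, \cdot)$ is the Taylor polynomial of $y \mapsto g(x - y)$ of degree $N - 1$ at $0$. Subtracting it is free because all moments of $\eta$ vanish. The remainder is bounded by $C_g r^N \lv z \rv^N$, and $\int \lv z \rv^N \lv \eta(z) \rv dz < \infty$, so $\lv g * \eta_r(x) \rv \le C r^N$ uniformly in $x$. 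Taking $N = m$ gives the factor $r^m$, but not yet the decay in $x$.

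The main obstacle is the spatial factor $(1 + \lv x \rv)^{-m}$, since $g * \eta_r$ is not compactly supported. Suppose $\operatorname{supp} g \subseteq \ball(0, R)$ and $\lv x \rv \ge 2R$. Then $\lv x - y \rv \ge \lv x \rv / 2$ for all $y \in \operatorname{supp} g$, and $r \le 1$, so the Schwartz decay of $\eta$ gives
\formula{
 \lv g * \eta_r(x) \rv & \le \lV g \rV_1 r^{-d} \sup_{\lv w \rv \ge \lv x \rv / (2 r)} \lv \eta(w) \rv \le C_M r^{-d} \Bigl(\frac{r}{\lv x \rv}\Bigr)^{M} .
}
Choosing $M = m + d$ yields $C r^m \lv x \rv^{-m - d} \le C r^m (1 + \lv x \rv)^{-m}$. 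For $\lv x \rv \le 2R$ the Taylor bound $C r^m$ suffices, since $(1 + \lv x \rv)^{-m} \ge (1 + 2R)^{-m}$ there. Adding the estimates for $\Lambda_1$ and $\Lambda_2$ gives the lemma, with constants depending on $\Lambda$, $\eta$, $\eps$, $m$; the dependence on $K$ enters only through $\chi$, hence through $\Lambda$ and $\eps$.
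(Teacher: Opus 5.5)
Your proof is correct and follows the paper's argument: the same cutoff splitting at $\eps/3$ and $2\eps/3$, Lemma~\ref{lem:auxiliary:singular} for $\chi\Lambda$, and, for the smooth part, a uniform $O(r^m)$ bound from the vanishing moments combined with a separate decay estimate for large $\lv x \rv$. The differences are cosmetic. The paper gets the uniform bound on the Fourier side via $\lv \fourier \eta(\xi) \rv \le C_{\eta, m} \lv \xi \rv^m$ rather than by Taylor expansion; in your Taylor argument you should take $N = \lceil m \rceil$ when $m$ is not an integer. For the far-field decay, the paper re-applies Lemma~\ref{lem:auxiliary:singular} to $(1 - \chi) f$ instead of using your direct Schwartz-tail estimate.
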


\begin{proof}
Fix $\eps > 0$ and a Schwartz function $\ph$ such that $\ph(x) = 1$ if $\dist(x, K) \le \tfrac{1}{3} \eps$ and $\ph(x) = 0$ if $\dist(x, K) \ge \tfrac{2}{3} \eps$. Denote
\formula{
 \Lambda_0 & = \ph \Lambda , \qquad \Lambda_1 = (1 - \ph) \Lambda = (1 - \ph) f .
}
Clearly, $\Lambda = \Lambda_0 + \Lambda_1$.

By Lemma~\ref{lem:auxiliary:singular} applied to the distribution $\Lambda_0$, the compact set $\{x \in \R^d : \dist(x, K) \le \tfrac{2}{3} \eps\}$, and $\eps$ replaced by $\tfrac{1}{3} \eps$, we have
\formula[eq:auxiliary:rough]{
 \lv \Lambda_0 * \eta_r(x) \rv & \le C_{\Lambda, \ph, \eta, K, \eps, m} \frac{r^m}{(1 + \lv x \rv)^m}
}
for every $m \ge 0$, $r \in (0, 1]$ and $x \in \R^d$ such that $\dist(x, K) \ge \eps$.

Similarly, applying Lemma~\ref{lem:auxiliary:singular} to the distribution $\Lambda_1$, the compact set $L$ equal to the support of $\Lambda$, and $\eps$ replaced by $1$, we obtain
\formula[eq:auxiliary:smooth]{
 \lv \Lambda_1 * \eta_r(x) \rv & \le C_{\Lambda, \ph, \eta, m} \frac{r^m}{(1 + \lv x \rv)^m}
}
for every $m \ge 0$, $r \in (0, 1]$ and $x \in \R^d$ such that $\dist(x, L) \ge 1$.

Furthermore, $\Lambda_1 = (1 - \ph) f$ is a smooth, compactly supported function. Using the Fourier inversion formula and the fact that $\fourier \eta_r(\xi) = \fourier \eta(r \xi)$, we find that
\formula{
 \lv \Lambda_1 * \eta_r(x) \rv & \le \frac{1}{(2 \pi)^d} \int_{\R^d} \lv \fourier \Lambda_1(\xi) \rv \lv \fourier \eta(r \xi) \rv d\xi .
}
Since all moments of $\eta$ are equal to zero, we have
\formula{
 \lv \fourier \eta(\xi) \rv & \le C_{\eta, m} \lv \xi \rv^m ,
}
and hence
\formula{
 \lv \Lambda_1 * \eta_r(x) \rv & \le \frac{C_{\eta, m} r^m}{(2 \pi)^d} \int_{\R^d} \lv \fourier \Lambda_1(\xi) \rv \lv \xi \rv^m d\xi = C_{\Lambda, \ph, \eta, m} r^m
}
for every $m \ge 0$, $r \in (0, 1]$ and $x \in \R^d$. It follows that~\eqref{eq:auxiliary:smooth} also holds when $\dist(x, L) < 1$. The desired estimate follows now by combining~\eqref{eq:auxiliary:rough} and~\eqref{eq:auxiliary:smooth}.
\end{proof}

%
%

\section*{}

\subsection*{Acknowledgements}

We thank Błażej Wróbel for inspiring discussions about the subject of the paper.

%
%

%
%

\end{document}